\newcommand{\ph}[2]{{\left({#1}\right)}_{#2}}
\newcommand{\gf}[1]{\Gamma{\left({#1}\right)}}
\renewcommand*{\bar}{\overline}
\newcommand{\bin}[2]{\left({\genfrac{}{}{0pt}{}{#1}{#2}}\right)}
\theoremstyle{plain}
\newtheorem{theorem}{Theorem}[section]
\newtheorem{prop}[theorem]{Proposition}
\newtheorem{cor}[theorem]{Corollary}
\theoremstyle{definition}
\newtheorem{defi}[theorem]{Definition}
\newtheorem{remark}[theorem]{Remark}
\numberwithin{equation}{section}
\begin{document}

\title[Transformations of Well-Poised Hypergeometric Functions over Finite Fields]{Transformations of Well-Poised Hypergeometric Functions over Finite Fields}
\author{Dermot M\lowercase{c}Carthy}  

\address{Department of Mathematics, Texas A\&M University, College Station, TX 77843-3368, USA}

\email{mccarthy@math.tamu.edu}


\subjclass[2010]{Primary: 11T24; Secondary: 11L99, 33C20}


\begin{abstract}
We define a hypergeometric function over finite fields which is an analogue of the classical generalized hypergeometric series. 
We prove that this function satisfies many transformation and summation formulas. 
Some of these results are analogous to those given by Dixon, Kummer and Whipple for the well-poised classical series.
We also discuss this function's relationship to other finite field analogues of the classical series, most notably those defined by Greene and Katz. 
\end{abstract}

\maketitle


\section{Introduction and Statement of Results}\label{sec_Intro}
The main goal of this paper is to find analogues of classical generalized hypergeometric series transformations, particularly Whipple's results on well-poised series, for hypergeometric functions over finite fields. 
Hypergeometric functions over finite fields have appeared in various forms in the literature (for example \cite{G}, \cite{Ka}) and our motivation for this work is their links to Fourier coefficients of certain modular forms \cite{A, AO, E, F2, DMC2, M, P} 
and the expectation that these transformations will lead to new identities between Fourier coefficients of modular forms. 
(This will be the subject of a forthcoming paper by the author and Matt Papanikolas.)

We start by recalling the classical generalized hypergeometric series ${_rF_s}[\cdots]$. For a complex number $a$ and a non-negative integer $n$ let $\ph{a}{n}$ denote the rising factorial defined by
\begin{equation*}\label{RisFact}
\ph{a}{0}:=1 \quad \textup{and} \quad \ph{a}{n} := a(a+1)(a+2)\dotsm(a+n-1) \textup{ for } n>0.
\end{equation*}
Then for complex numbers $a_i$, $b_j$ and $z$, with none of the $b_j$ a negative integer or zero, 
\begin{equation*}
{_rF_s} \biggl[ \begin{array}{ccccc} a_1, & a_2, & a_3, & \dotsc, & a_r \\
\phantom{a_1} & b_1, & b_2, & \dotsc, & b_s \end{array}
\Big| \; z \biggr]
:=\sum^{\infty}_{n=0}
\frac{\ph{a_1}{n} \ph{a_2}{n} \ph{a_3}{n} \dotsm \ph{a_r}{n}}
{\ph{b_1}{n} \ph{b_2}{n} \dotsm \ph{b_s}{n}}
\; \frac{z^n}{{n!}}.
\end{equation*}
This series satisfies many powerful transformation and summation formulas. The first among them was given by Gauss in 1812. 
\begin{theorem}[Gauss \cite{Ga}]\label{thm_Gauss}
If $Re(c-a-b)>0$, then
\begin{equation*}
{_{2}F_1} \biggl[ \begin{array}{cc} a, & b \vspace{.02in}\\
\phantom{a} & c \end{array}
\Big| \; 1 \biggr]
= \frac{\gf{c} \, \gf{c-a-b}}{\gf{c-a} \,\gf{c-b}}.
\end{equation*}
\end{theorem}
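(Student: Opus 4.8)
The plan is to prove the summation via Euler's integral representation of the ${_2F_1}$, which converts the evaluation at $z=1$ directly into a single Beta integral. First I would recall (or quickly derive) the representation
\begin{equation*}
{_2F_1} \biggl[ \begin{array}{cc} a, & b \vspace{.02in}\\ \phantom{a} & c \end{array} \Big| \; z \biggr] = \frac{\gf{c}}{\gf{b}\,\gf{c-b}} \int_0^1 t^{b-1}(1-t)^{c-b-1}(1-zt)^{-a}\,dt,
\end{equation*}
valid for $\mathrm{Re}(c)>\mathrm{Re}(b)>0$ and $z\notin[1,\infty)$. This follows by expanding $(1-zt)^{-a}=\sum_{n\geq 0}\ph{a}{n}(zt)^n/\fac{n}$ and integrating term by term: each term produces the Beta integral $\int_0^1 t^{b+n-1}(1-t)^{c-b-1}\,dt=\gf{b+n}\gf{c-b}/\gf{c+n}$, and using $\gf{b+n}/\gf{b}=\ph{b}{n}$ and $\gf{c+n}/\gf{c}=\ph{c}{n}$ the terms reassemble into the defining series.

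Setting $z=1$, the integrand collapses to $t^{b-1}(1-t)^{c-a-b-1}$, so the right-hand side becomes one Beta integral:
\begin{equation*}
{_2F_1} \biggl[ \begin{array}{cc} a, & b \vspace{.02in}\\ \phantom{a} & c \end{array} \Big| \; 1 \biggr] = \frac{\gf{c}}{\gf{b}\,\gf{c-b}} \int_0^1 t^{b-1}(1-t)^{c-a-b-1}\,dt = \frac{\gf{c}}{\gf{b}\,\gf{c-b}}\cdot\frac{\gf{b}\,\gf{c-a-b}}{\gf{c-a}},
\end{equation*}
where the evaluation $\int_0^1 t^{b-1}(1-t)^{c-a-b-1}\,dt=\gf{b}\gf{c-a-b}/\gf{c-a}$ is the standard Beta-function formula, valid precisely because $\mathrm{Re}(b)>0$ and $\mathrm{Re}(c-a-b)>0$. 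Cancelling $\gf{b}$ then yields $\gf{c}\gf{c-a-b}/(\gf{c-a}\gf{c-b})$, exactly as claimed.

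The main obstacle is the passage to the boundary $z=1$, since the integral identity is established for $|z|<1$ whereas we need it at $z=1$. I would justify this by dominated convergence: under $\mathrm{Re}(c-a-b)>0$ the dominating function $t^{b-1}(1-t)^{c-a-b-1}$ is integrable on $[0,1]$, so the integral is continuous as $z\to 1^{-}$, and Abel's theorem guarantees that the series itself converges at $z=1$ to this limit. Finally, the auxiliary constraint $\mathrm{Re}(c)>\mathrm{Re}(b)>0$ needed for the integral representation can be removed by analytic continuation in $a,b,c$, since both sides are meromorphic in the parameters. Alternatively, one can avoid the integral altogether by establishing a contiguous relation expressing ${_2F_1}[a,b;c;1]$ through ${_2F_1}[a,b;c+1;1]$, iterating, and identifying the limit via Stirling's asymptotics; I would present the integral route as the cleaner of the two.
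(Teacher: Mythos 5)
The paper does not actually prove this statement: Theorem \ref{thm_Gauss} is quoted as classical background with a citation to Gauss's 1812 memoir, so there is no internal proof to compare against (the nearest internal relative, the finite-field analogue of Theorem \ref{thm_1a}, is proved by entirely different means, via Helversen-Pasotto's Gauss-sum identity). Judged on its own terms, your Euler-integral argument is the standard modern proof and is essentially sound: the term-by-term derivation of the integral representation for $\mathrm{Re}(c)>\mathrm{Re}(b)>0$, the collapse at $z=1$ to the Beta integral $\int_0^1 t^{b-1}(1-t)^{c-a-b-1}\,dt=\gf{b}\,\gf{c-a-b}/\gf{c-a}$, and the removal of the auxiliary constraint by analytic continuation in the parameters (both sides analytic, respectively meromorphic, and agreeing on an open subset of the region $\mathrm{Re}(c-a-b)>0$) are all in order.

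The one step you state incorrectly is the boundary passage. Abel's theorem does not ``guarantee that the series itself converges at $z=1$''; it \emph{presupposes} convergence at the boundary point and only then identifies the sum with the radial limit. As written your logic is circular, since convergence of $\sum_n \ph{a}{n}\ph{b}{n}/\bigl(\ph{c}{n}\,\fac{n}\bigr)$ is precisely what the hypothesis $\mathrm{Re}(c-a-b)>0$ must be shown to deliver before Abel can be invoked. The patch is routine: by Stirling, or from the ratio expansion $u_{n+1}/u_n = 1-\tfrac{c-a-b+1}{n}+O(n^{-2})$ together with Gauss's test, the general term satisfies $|u_n|=O\bigl(n^{\mathrm{Re}(a+b-c)-1}\bigr)$, so the series converges absolutely exactly when $\mathrm{Re}(c-a-b)>0$, and Abel's theorem then applies legitimately. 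Alternatively you can dispense with Abel entirely: for real parameters with $a>0$, $c>b>0$ and $c-a-b>0$, every term in the interchanged sum and integral is nonnegative, so Tonelli justifies integrating the series term by term directly at $z=1$, and the general case follows from the same analytic continuation you already invoke at the end. With that one sentence repaired, the proof is complete.
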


In \cite{W}, Whipple studied series where $r=s+1$, $z=\pm 1$, and $a_1 +1 = a_2 + b_1 = a_3 +b_2 = \dotsm = a_r + b_s$ which he named {\it well-poised}.
 Summation formulas for series of this type already existed before Whipple's work in the case of ${_2F_1}[\cdots | -1]$, due to Kummer \cite{Ku}, and ${_3F_2}[\cdots | 1]$, due to Dixon \cite{Dix}. 
The main results in \cite{W} for transformations of well-poised series in their most general form are as follows.
\begin{theorem}[Whipple \cite{W}]\label{thm_Whipple_4F3}
\begin{multline*}
{_{4}F_3} \biggl[ \begin{array}{cccc} a, & b, &c, &d \vspace{.02in}\\
\phantom{a} & 1+a-b, & 1+a-c, & 1+a-d \end{array}
\Big| \; -1 \biggr]\\[9pt]
= \frac{\gf{1+a-c} \, \gf{1+a-d}}{\gf{1+a} \,\gf{1+a-c-d}} \;
{_{3}F_2} \biggl[ \begin{array}{ccc} 1+\frac{1}{2}a-b, &c, &d \vspace{.02in}\\
\phantom{a} & 1+\frac{1}{2}a, & 1+a-b \end{array}
\Big| \; 1 \biggr].
\end{multline*}
\end{theorem}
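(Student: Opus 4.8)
The plan is to reconstruct Whipple's argument, treating the identity as the $r=4$ member of the family whose $r=2$ and $r=3$ cases are Kummer's and Dixon's theorems; the half-integer shift $1+\tfrac12 a$ on the right is the fingerprint of a quadratic transformation, exactly as in the familiar proof of Kummer's theorem, where a quadratic transformation carries the argument from $z=-1$ to $z=1$ and Gauss's theorem (Theorem~\ref{thm_Gauss}) then evaluates the resulting series. So the engine throughout will be Theorem~\ref{thm_Gauss}. First I would record two structural facts: the left side is symmetric in $c$ and $d$, and the Gamma prefactor on the right is precisely the reciprocal of the Gauss value of ${_2F_1}[c,d;1+a;1]$, i.e. $\frac{\gf{1+a-c}\,\gf{1+a-d}}{\gf{1+a}\,\gf{1+a-c-d}}=\bigl({_2F_1}[c,d;1+a;1]\bigr)^{-1}$. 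This tells me that $c$ and $d$ enter the final answer only through a single Gauss summation, so they are the parameters I want to ``sum out.''

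\textbf{Primary route.} I would first prove the identity in the terminating case by setting $d=-N$ for a non-negative integer $N$ (legitimate by the $c\leftrightarrow d$ symmetry). Both sides then become finite sums: the prefactor collapses to $\ph{1+a}{N}/\ph{1+a-c}{N}$, and the claim becomes a finite hypergeometric identity between a terminating well-poised ${_4F_3}[-1]$ and a terminating ${_3F_2}[1]$. I would establish this finite identity by writing the left side as a double sum---expanding one Pochhammer quotient, interchanging the two finite summations, and evaluating the inner sum by Gauss's theorem---while the duplication formula $\ph{x}{2m}=2^{2m}\,\ph{\tfrac{x}{2}}{m}\,\ph{\tfrac{x+1}{2}}{m}$ is what forces the parameters $1+\tfrac12 a$ and $1+\tfrac12 a-b$ to appear. (Equivalently, one may reverse the order of summation and match against a known terminating quadratic transformation, or verify the finite identity by creative telescoping, both sides satisfying the same first-order recurrence in $N$.)

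\textbf{Extension and main obstacle.} Having proved equality for $d\in\{0,-1,-2,\dots\}$, I would promote it to general $d$ by \emph{Carlson's theorem}: fixing generic $a,b,c$, both sides are analytic in $d$ on a common domain where the well-poised ${_4F_3}[-1]$ converges (the term ratio gives absolute convergence for $\Re(b+c+d-a)<1$, an open region reached by all sufficiently negative $d$), and their difference is of admissible exponential type, so its vanishing at $d=0,-1,-2,\dots$ forces it to vanish identically; analytic continuation then extends the identity to the full domain of meromorphy. I expect the genuine difficulty to lie in the terminating step: organizing the double sum so that the inner series is exactly of Gauss type, and bookkeeping the well-poised pairings $(x,\,1+a-x)$ so that the half-arguments $\tfrac12 a$ emerge cleanly rather than the far more complicated ${_8F_7}$ that a naive even/odd split of the $z=-1$ series produces. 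A secondary obstacle is the rigorous verification of the growth hypotheses needed to apply Carlson's theorem, which requires uniform estimates on the well-poised $(-1)$ series as a function of $d$.
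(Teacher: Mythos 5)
The paper never actually proves this statement: Theorem \ref{thm_Whipple_4F3} is quoted as classical background with a citation to Whipple \cite{W}, so there is no in-paper proof to match yours line by line. What the paper does prove is the finite-field analogue, Theorem \ref{cor_5}, and in Section \ref{sec_Proofs} the author says explicitly that his strategy ``mirrors the method used by Whipple'': Gauss's theorem (in the guise of Theorem \ref{thm_HP}) is applied \emph{in reverse} to the well-poised pair $(c,1+a-c)$, $(d,1+a-d)$ --- classically this is Barnes' first lemma --- to lower the order (Theorem \ref{thm_7}); the resulting inner well-poised ${_3F_2}(1)$'s are then summed by \emph{Dixon's} theorem (Theorem \ref{thm_4} in the finite-field setting); and the Gamma-quotients so produced reassemble into the right-hand ${_3F_2}$. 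Your route --- terminate at $d=-N$, prove a finite identity, extend by Carlson's theorem --- is a genuinely different and legitimate modern strategy: it avoids contour-integral (or character-sum) machinery at the price of the growth estimates Carlson demands, whereas Whipple's route has an explicit mechanism for the half-parameter. In his argument $1+\frac{1}{2}a$ enters through Dixon's evaluation, whose value contains $\gf{1+\frac{1}{2}a}$-type factors; in the paper's analogue this is exactly the sum over square roots $\sum_{R^2=A}$ in Theorems \ref{cor_3} and \ref{cor_4}. Your opening observation that the prefactor is the reciprocal of the Gauss value ${_2F_1}[c,d;1+a;1]$ is correct and is indeed the right ``Gauss in reverse'' instinct, but you attach it to the wrong pair of parameters.

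That pinpoints the one soft spot in your sketch. The primary mechanism you lead with for the terminating case --- expand one Pochhammer quotient, interchange, evaluate the inner sum by Gauss, with the duplication formula producing $1+\frac{1}{2}a$ --- is unlikely to go through as described: for instance, expanding $\ph{b}{n}/\ph{1+a-b}{n}$ by Chu--Vandermonde and interchanging leaves an inner ${_3F_2}(-1)$ with all parameters shifted, not a Gauss-summable ${_2F_1}$; the natural kernel at this level is Kummer/Dixon, not Gauss, and the quadratic shift comes from those theorems rather than from the duplication formula. Your stated fallback --- certifying the finite identity by creative telescoping --- is sound and rescues the plan (though you should not presume the recurrence in $N$ is first order; the algorithm may return order two). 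The Carlson step is routine modulo the estimates you flag, with two small repairs: the ${_4F_3}(-1)$ side is analytic in $s=-d$ only in a half-plane $\mathrm{Re}(s)\geq M$, so shift $s\mapsto s+M$ before invoking Carlson; and the absolute-convergence threshold is $\mathrm{Re}(b+c+d-a)<\frac{3}{2}$, not $1$ (harmless, since you only used a sufficient condition). In summary: correct in outline, by a route genuinely different from the one the paper mirrors, with the terminating step resting on the telescoping alternative rather than the Gauss-plus-duplication manipulation you propose first.
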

\begin{theorem}[Whipple \cite{W}]\label{thm_Whipple_5F4}
If one of $1+\frac{1}{2}a-b$, $c$, $d$, $e$ is a negative integer, then
\begin{multline*}
{_{5}F_4} \biggl[ \begin{array}{ccccc} a, & b, &c, &d, &e \vspace{.02in}\\
\phantom{a} & 1+a-b, & 1+a-c, & 1+a-d, & 1+a-e \end{array}
\Big| \; 1 \biggr]\\[9pt]
= \frac{\gf{1+a-c} \, \gf{1+a-d} \, \gf{1+a-e} \, \gf{1+a-c-d-e}}{\gf{1+a} \,\gf{1+a-d-e}\,\gf{1+a-c-d}\,\gf{1+a-c-e}} \\[6pt]
\cdot {_{4}F_3} \biggl[ \begin{array}{cccc} 1+\frac{1}{2}a-b, &c, &d, &e \vspace{.02in}\\
\phantom{a} & 1+\frac{1}{2}a, & c+d+e-a, &1+a-b \end{array}
\Big| \; 1 \biggr].
\end{multline*}
\end{theorem}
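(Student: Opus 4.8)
The plan is to obtain this identity as a single-parameter specialization of Whipple's transformation of a terminating very-well-poised ${}_7F_6(1)$ into a balanced (Saalsch\"utzian) ${}_4F_3(1)$, namely
\[
{}_7F_6\!\left[\begin{array}{c}
a,\ 1+\tfrac12 a,\ b,\ c,\ d,\ e,\ -n\\
\tfrac12 a,\ 1+a-b,\ 1+a-c,\ 1+a-d,\ 1+a-e,\ 1+a+n
\end{array}\ \Big|\ 1\right]
=\frac{(1+a)_n\,(1+a-d-e)_n}{(1+a-d)_n\,(1+a-e)_n}\;
{}_4F_3\!\left[\begin{array}{c}
-n,\ d,\ e,\ 1+a-b-c\\
1+a-b,\ 1+a-c,\ d+e-a-n
\end{array}\ \Big|\ 1\right],
\]
valid for every non-negative integer $n$. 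First I would record the structural observation that the ${}_4F_3$ on the right of the statement is itself balanced with one numerator entry a non-positive integer, whereas the ${}_5F_4$ on the left is well-poised; these are exactly the input and output shapes of Whipple's transformation, which is what suggests the correct specialization.

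The key step is to set $b=\tfrac12 a$ in the displayed identity. Then the upper entry $\tfrac12 a$ cancels the lower entry $\tfrac12 a$, and the upper entry $1+\tfrac12 a$ cancels $1+a-b=1+\tfrac12 a$, so the left-hand side collapses to the well-poised
\[
{}_5F_4\!\left[\begin{array}{c}
a,\ c,\ d,\ e,\ -n\\
1+a-c,\ 1+a-d,\ 1+a-e,\ 1+a+n
\end{array}\ \Big|\ 1\right].
\]
On the right, $1+a-b-c$ becomes $1+\tfrac12 a-c$ and $1+a-b$ becomes $1+\tfrac12 a$, while $d+e-a-n$ is unchanged. Relabelling $(c,d,e,-n)\mapsto(b,c,d,e)$, which is legitimate because $-n$ plays the same symmetric role as the other numerator parameters of a well-poised series, turns the left-hand side into precisely the ${}_5F_4$ of the statement and turns the right-hand ${}_4F_3$ into the one with numerator parameters $1+\tfrac12 a-b,\,c,\,d,\,e$ and denominator parameters $1+\tfrac12 a,\,c+d+e-a,\,1+a-b$, the entry $c+d+e-a$ appearing exactly because the new $e$ equals $-n$. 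Finally I would rewrite the Pochhammer prefactor, which after relabelling reads $\frac{(1+a)_n\,(1+a-c-d)_n}{(1+a-c)_n\,(1+a-d)_n}$, using $(x)_n=\Gamma(x+n)/\Gamma(x)$ with $n=-e$; a direct computation turns it into the four-fold quotient of Gamma values in the statement. This establishes the theorem whenever $e$, and hence (by the symmetry of the well-poised ${}_5F_4$ in its numerator parameters) any one of $b,c,d,e$, is a non-positive integer.

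It remains to cover the case in which the terminating entry is $1+\tfrac12 a-b$ rather than one of $b,c,d,e$. Here I would argue by analytic continuation: for fixed generic $a,b,c,d$ both sides are holomorphic in $e$ throughout the region where the left-hand ${}_5F_4(1)$ converges, and they agree at $e=0,-1,-2,\dots$ by the terminating case just proved. Carlson's theorem, applied after the routine verification of an exponential growth bound in $e$, then forces equality for all admissible $e$, in particular for the configuration $b=1+\tfrac12 a+(\text{non-negative integer})$.

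The one genuine input is Whipple's ${}_7F_6\to{}_4F_3$ transformation itself, which I am treating as known; reproving it is where the real work lies, and it is the step I expect to be the main obstacle. For the terminating instance needed here it can be derived self-contained either by expanding the balanced ${}_4F_3$ through the Pfaff--Saalsch\"utz summation and interchanging the resulting double sum, or by reversing the order of summation in the terminating very-well-poised series; in both routes, managing the very-well-poised factor $\frac{a+2k}{a}=\frac{(1+\tfrac12 a)_k}{(\tfrac12 a)_k}$ so that the rearrangement telescopes to exactly the balanced ${}_4F_3$, together with the bookkeeping of the Gamma prefactor, is the delicate part rather than any single summation.
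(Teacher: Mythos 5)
First, a point of orientation: the paper does not prove Theorem~\ref{thm_Whipple_5F4} at all --- it is quoted from Whipple \cite{W} as classical background --- so the only meaningful comparison is with Whipple's original method, which the paper explicitly imitates over finite fields in Section~\ref{sec_Proofs}. Your outline is correct. I checked the specialization $b=\tfrac{1}{2}a$ in the terminating very-well-poised ${}_7F_6 \to {}_4F_3$ transformation: the pairs $\bigl(1+\tfrac{1}{2}a,\,1+a-b\bigr)$ and $\bigl(\tfrac{1}{2}a,\,\tfrac{1}{2}a\bigr)$ do cancel, the relabelling $(c,d,e,-n)\mapsto(b,c,d,e)$ is legitimate by the symmetric pairing of a well-poised series, the denominator entry $d+e-a-n$ becomes exactly $c+d+e-a$, and the conversion of $\frac{\ph{1+a}{n}\,\ph{1+a-c-d}{n}}{\ph{1+a-c}{n}\,\ph{1+a-d}{n}}$ via $\ph{x}{n}=\gf{x+n}/\gf{x}$ with $n=-e$ reproduces the stated Gamma quotient; since the right-hand side is symmetric in $c,d,e$, the terminating cases are all covered. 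Your Carlson argument for the remaining case ($1+\tfrac{1}{2}a-b$ a negative integer) is the standard device, though ``routine'' conceals the one genuine point of care: the terminating ${}_4F_3$ carries $\ph{c+d+e-a}{j}$ in its denominators and the prefactor carries $\gf{1+a-c-d-e}$, placing poles at $e=a-c-d-j$ and $e=1+a-c-d+j$, so you must first confine $(a,b,c,d)$ to an open region where the half-plane $\mathrm{Re}(e)\le 0$ is pole-free and the ${}_5F_4(1)$ converges there, apply Carlson, and only then continue analytically in the remaining parameters.

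Where you genuinely diverge from the source is in logical direction. You derive the ${}_5F_4$ result \emph{downward} from the terminating ${}_7F_6\to{}_4F_3$ transformation, a strictly stronger input that you leave unproved (and candidly flag as the real work). Whipple's own derivation --- and the finite-field proofs in this paper that mirror it, namely Theorem~\ref{thm_7} and its application in the proof of Theorem~\ref{cor_6} --- goes \emph{upward}: one applies Gauss's summation theorem ``in reverse'' inside the series to replace a quotient of Gamma factors by an inner sum, interchanges the order of summation, and thereby relates well-poised series of adjacent orders, building from the ${}_2F_1$ level with no appeal to Carlson's theorem in the terminating steps. Your route buys a short, checkable deduction from a single named theorem; the paper's route is self-contained from first principles and, not incidentally, is the one that survives transplantation to $\mathbb{F}_q$, where no analogue of the analytic-continuation step is available.
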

\noindent We note that \cite{W} also includes transformations for well-poised ${_6F_5}[\cdots | -1]$ and ${_7F_6}[\cdots | 1]$ where the `$b$' parameter is specialized to equal $1+\frac{1}{2}a$.  

We now define a finite field analogue of the classical series. Let $\mathbb{F}_{q}$ denote the finite field with $q$, a prime power, elements. Let $\widehat{\mathbb{F}^{*}_{q}}$ denote the group of multiplicative characters of $\mathbb{F}^{*}_{q}$. 
We extend the domain of $\chi \in \widehat{\mathbb{F}^{*}_{q}}$ to $\mathbb{F}_{q}$, by defining $\chi(0):=0$ (including the trivial character $\varepsilon$) and denote $\bar{\chi}$ as the inverse of $\chi$. 
Let $\theta$ be a fixed non-trivial additive character of $\mathbb{F}_q$ and for $\chi \in \widehat{\mathbb{F}^{*}_{q}}$ we define the Gauss sum $g(\chi):= \sum_{x \in \mathbb{F}_q} \chi(x) \theta(x)$.

\begin{defi}\label{def_HypFnFF}
For $A_0,A_1,\dotsc, A_n, B_1 \dotsc, B_n \in \widehat{\mathbb{F}_q^{*}}$ and $x \in \mathbb{F}_q$ define
\begin{equation*}
{_{n+1}F_{n}} {\biggl( \begin{array}{cccc} A_0, & A_1, & \dotsc, & A_n \\
 \phantom{B_0,} & B_1, & \dotsc, & B_n \end{array}
\Big| \; x \biggr)}_{q}^{\star}
:= \frac{1}{q-1}  \sum_{\chi \in \widehat{\mathbb{F}_q^{*}}} 
\prod_{i=0}^{n} \frac{g(A_i \chi)}{g(A_i)}
\prod_{j=1}^{n} \frac{g(\bar{B_j \chi})}{g(\bar{B_j})}
 g(\bar{\chi})
 \chi(-1)^{n+1}
 \chi(x).
 \end{equation*}
\end{defi}

\noindent Throughout the paper we will refer to this function as ${_{n+1}F_{n}}(\cdots)^{\star}$. Using properties of Gauss and Jacobi sums it is easy to see that ${_{n+1}F_{n}}(\cdots)^{\star}$ is independent of the choice of additive character.
We will call the function well-poised when $x = \pm1$ and each $B_j = A_0 \bar{A_j}$, mirroring the conditions in the classical case.

We now state our main results. The first two results are analogues of Whipple's Theorems \ref{thm_Whipple_4F3} and \ref{thm_Whipple_5F4} above. For brevity, if $A \in \widehat{\mathbb{F}_q^{*}}$ is a square we will write $A = \square$.
\begin{theorem}\label{cor_5}
For $A$, $B$, $C$, $D \in \widehat{\mathbb{F}_q^{*}}$,
\begin{multline*}
{_{4}F_3} \biggl( \begin{array}{cccc} A, & B, & C, & D \vspace{.02in}\\
\phantom{A} & A\bar{B}, & A\bar{C}, & A\bar{D}  \end{array}
\Big| \; {-1} \biggr)_{q}^{\star}\\
=
\begin{cases}
0 & \textup{if $A \neq \square$,}\\[4pt]
\dfrac{g(\bar{A}) \, g(\bar{A}CD) }{ g(\bar{A}C) \, g( \bar{A}D)}
\displaystyle\sum_{R^2=A} 
{_{3}F_2} \biggl( \begin{array}{ccc} R\bar{B}, & C, & D \vspace{.02in}\\
\phantom{A} & R, & A\bar{B} \end{array}
\Big| \; 1 \biggr)_{q}^{\star}
& \genfrac{}{}{0pt}{0} {\textup{if $A = \square$,  $A \neq \varepsilon$, $B \neq \varepsilon$,}}{\;\;\; \textup{$B^2 \neq A$ and $CD \neq A$.}}
\end{cases}
\end{multline*}
\end{theorem}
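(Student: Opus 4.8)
The plan is to expand the left-hand side directly from Definition~\ref{def_HypFnFF} and reduce the statement to an identity among Gauss sums, then to split into the two regimes according to whether $A$ is a square. Writing the left-hand side as $S=\frac{1}{q-1}\sum_{\chi}\Phi(\chi)$, the summand $\Phi(\chi)$ collects the eight Gauss sums $g(A\chi),g(B\chi),g(C\chi),g(D\chi)$ together with $g(\bar A B\bar\chi),g(\bar A C\bar\chi),g(\bar A D\bar\chi),g(\bar\chi)$ (using $\overline{A\bar B\chi}=\bar A B\bar\chi$, etc.), divided by the $\chi$-independent normalization and multiplied by $\chi(-1)$; note that here the character factor is $\chi(-1)^{n+1}\chi(x)=\chi(-1)^{4}\chi(-1)=\chi(-1)$. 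The whole theorem is thus an equality of two explicit character sums, and everything hinges on how the parameter $A$ interacts with the quadratic character $\phi$.

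The vanishing case is the clean part, and I would establish it first via the well-poised symmetry $\chi\mapsto\bar A\bar\chi$. Under this bijection of $\widehat{\mathbb{F}_q^{*}}$ the four ``numerator'' Gauss sums and the four ``denominator-index'' Gauss sums are interchanged in pairs, $g(A\chi)\leftrightarrow g(\bar\chi)$, $g(B\chi)\leftrightarrow g(\bar A B\bar\chi)$, $g(C\chi)\leftrightarrow g(\bar A C\bar\chi)$, $g(D\chi)\leftrightarrow g(\bar A D\bar\chi)$, so the product is fixed, while $\chi(-1)\mapsto(\bar A\bar\chi)(-1)=A(-1)\chi(-1)$. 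Hence $\Phi(\bar A\bar\chi)=A(-1)\,\Phi(\chi)$, and since the substitution permutes the summation index we obtain $S=A(-1)\,S$. A standard computation gives $A(-1)=-1$ exactly when $A$ is a non-square, i.e. $A\neq\square$, so $S=2S\cdot\tfrac12$ collapses to $S=-S=0$. This needs no side conditions, matching the statement.

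The non-vanishing case $A=\square$ is the crux. Writing $A=R^{2}$, the two square roots are $R$ and $\phi R$, which are precisely the two terms of the sum on the right, so the mechanism that must appear is one relating Gauss sums indexed by $A$ to Gauss sums indexed by its square roots: this is the Hasse--Davenport product (duplication) relation $g(\theta)\,g(\phi\theta)=\bar\theta(4)\,g(\phi)\,g(\theta^{2})$. I would organize the $\chi$-sum so that this relation applies --- either by separating the contribution of the square characters, on which $g(A\chi)=g(R^{2}\chi)$ becomes a Gauss sum of a perfect square and splits off half-characters $R,\phi R$, or equivalently by exploiting that when $A=\square$ the involution $\chi\mapsto\bar A\bar\chi$ above now \emph{fixes} $\Phi$ (its fixed points $\chi^{2}=\bar A$ are themselves the square roots of $\bar A$). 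Reassembling the two cosets should produce the two-term sum over $R^{2}=A$, with the residual characters $R\bar B,\,C,\,D,\,R,\,A\bar B$ assembling into the reduced ${_3F_2}(\cdots|1)^{\star}$, and the leftover Gauss sums consolidating into the constant $g(\bar A)\,g(\bar A CD)\big/\!\left(g(\bar A C)\,g(\bar A D)\right)$.

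I expect the main obstacle to be exactly this quadratic/duplication step together with the bookkeeping of constants: recovering the precise prefactor will require repeated use of the reflection formula $g(\theta)\,g(\bar\theta)=\theta(-1)\,q$ and careful tracking of the auxiliary factors $\bar\theta(4)$, $g(\phi)$ and the signs $\theta(-1)$ that Hasse--Davenport introduces. The four side conditions are what keep this manipulation legitimate: $A\neq\varepsilon$ keeps $R,\phi R$ from degenerating, $B\neq\varepsilon$ and $B^{2}\neq A$ keep $R\bar B$ and $\phi R\bar B$ nontrivial, and $CD\neq A$ keeps $\bar A CD$ nontrivial, so that every Gauss sum in sight is of a nontrivial character and none of the reflection or duplication identities breaks down. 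A viable alternative that sidesteps much of the sign bookkeeping is to translate each Gauss sum into the $p$-adic gamma function via the Gross--Koblitz formula and deduce the identity from the classical gamma-function relation underlying Whipple's Theorem~\ref{thm_Whipple_4F3}; there the square-root ambiguity of $1+\tfrac12 a$ reappears as the sum over $R$, and the cost is verifying the requisite $p$-adic estimates.
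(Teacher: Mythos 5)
Your vanishing case is correct, and it is a genuinely different argument from the paper's. The paper obtains the case $A \neq \square$ by first applying its recursion (Theorem \ref{thm_7}, i.e.\ the Helversen-Pasotto identity run in reverse) and then invoking the lower-order evaluations (Theorems \ref{thm_3} and \ref{thm_4}), whereas your substitution $\chi \mapsto \bar{A}\bar{\chi}$ permutes the eight Gauss sums in pairs and multiplies the summand by $A(-1)$, giving $S = A(-1)\,S$ directly; since $A(-1) = -1$ exactly when $A$ is a non-square (for odd $q$; for even $q$ every character is a square and the case is vacuous), $S = 0$ with no side conditions. This one-step symmetry in fact proves all of Corollary \ref{cor_7} at once, since at $x = (-1)^n$ the character factor is always $\chi(-1)^{2n+1} = \chi(-1)$; it is slicker than the paper's induction.

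The case $A = \square$, however, is where the theorem's content lies, and there your proposal is a plan rather than a proof. You name the Hasse--Davenport product formula but never execute the decomposition: duplication relates the \emph{product} $g(\theta)\,g(\phi\theta)$ over the two square roots to $g(\theta^2)$, while the right-hand side of the theorem is an \emph{additive} two-term sum over $R^2 = A$, and nothing in your sketch explains how splitting the $\chi$-sum into cosets of the squares would generate that additive structure, the ${_3F_2}$ with parameters $R\bar{B}, C, D;\, R, A\bar{B}$, or the prefactor; ``should produce'' is not an argument. For the record, the paper's mechanism is different: it applies Theorem \ref{thm_HP} in reverse to the block $g(C\chi)g(D\chi)g(\bar{A}C\bar{\chi})g(\bar{A}D\bar{\chi})$, introducing an auxiliary character $\psi$ (this is Theorem \ref{thm_7}), then evaluates the inner ${_3F_2}(A, B, \bar{\psi};\, A\bar{B}, A\psi \mid 1)^{\star}$ uniformly in $\psi$ by the Dixon analogue, Theorem \ref{thm_4} --- where the sum over $R^2 = A$ arises from the orthogonality evaluation of Proposition \ref{thm_2}, not from duplication --- after which the $\psi$-sum reassembles, directly from Definition \ref{def_HypFnFF}, into ${_3F_2}(R\bar{B}, C, D;\, R, A\bar{B} \mid 1)^{\star}$. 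Your reading of the hypotheses is also off: they are not there to keep Gauss sums nontrivial. The conditions $A \neq \varepsilon$, $B \neq \varepsilon$, $B^2 \neq A$ are exactly what ensure the generic branch of Theorem \ref{thm_4} applies to the inner ${_3F_2}$ for \emph{every} $\psi$ simultaneously, and $CD \neq A$ kills a genuine extra term: when $CD = A$ the asserted identity is false as stated, the correct statement (Theorem \ref{thm_5}) carrying an additional $q(q-1)\,\delta(\bar{A}CD)$ multiple of ${_2F_1}(A, B;\, A\bar{B} \mid -1)^{\star}$. Finally, the Gross--Koblitz alternative cannot work as described: Whipple's Theorem \ref{thm_Whipple_4F3} is an archimedean identity and does not yield an exact equality of Gauss-sum expressions (with their $\delta$-term corrections) via ``$p$-adic estimates''; some finite-field summation input such as Theorem \ref{thm_HP} is unavoidable.
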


\begin{theorem}\label{cor_6}
For $A$, $B$, $C$, $D$, $E \in \widehat{\mathbb{F}_q^{*}}$, such that, when $A$ is a square, $A \neq \varepsilon$, $B \neq \varepsilon$, $B^2 \neq A$, $CD \neq A$, $CE \neq A$, $DE \neq A$ and $CDE \neq A$,
 \begin{multline*}
{_{5}F_4} \biggl( \begin{array}{ccccc} A, & B, & C, & D, & E \vspace{.02in}\\
\phantom{A} & A\bar{B}, & A\bar{C}, & A\bar{D}, & A\bar{E}  \end{array}
\Big| \; 1 \biggr)_{q}^{\star}\\
=
\begin{cases}
0 & \textup{if $A \neq \square$,}\\[18pt] 
 \displaystyle\frac{g(\bar{A}) g(\bar{A}DE) g(\bar{A}CD) g(\bar{A}CE)}{g(\bar{A}C) g(\bar{A}D) g(\bar{A}E) g(\bar{A}CDE)} \displaystyle\sum_{R^2=A} 
{_{4}F_3} \biggl( \begin{array}{cccc} R\bar{B}, & C, & D,  & E  \vspace{.02in}\\
 \phantom{A,} & R, & \bar{A}CDE, & A\bar{B} \end{array}
 \Big| \; 1 \biggr)_{q}^{\star}\\[24pt]
\qquad + \displaystyle\frac{g(\bar{A}DE) g(\bar{A}CD) g(\bar{A}CE) q}{g(C) g(D) g(E)  g(\bar{A}C) g(\bar{A}D) g(\bar{A}E)} \;
{_{2}F_1} \biggl(  \begin{array}{ccccc} A, & B \vspace{.02in}\\
 \phantom{A,} & A\bar{B} \end{array}
 \Big| \; {-1} \biggr)_{q}^{\star}
& \textup{otherwise.}
 \end{cases}
 \end{multline*}
\end{theorem}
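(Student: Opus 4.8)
The plan is to prove the identity directly from Definition~\ref{def_HypFnFF}, using the proof of Theorem~\ref{cor_5} as a template and the Hasse--Davenport product relation as the main engine. Write $\phi$ for the quadratic character, and let $S(\chi)$ denote the summand in the defining sum of the left-hand side, so that the left-hand side equals $\tfrac{1}{q-1}\sum_{\chi}S(\chi)$ with
\[
S(\chi)=\prod_{X\in\{A,B,C,D,E\}}\frac{g(X\chi)}{g(X)}\;\prod_{X\in\{B,C,D,E\}}\frac{g(\bar AX\bar\chi)}{g(\bar AX)}\;g(\bar\chi)\,\chi(-1),
\]
the last factor being $\chi(-1)^{n+1}\chi(x)=\chi(-1)^{5}\chi(1)=\chi(-1)$.

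I would first dispose of the case $A\neq\square$. Under the substitution $\chi\mapsto\bar A\bar\chi$ each upper factor $g(X\chi)$ is interchanged with the matching lower factor $g(\bar AX\bar\chi)$ for $X\in\{B,C,D,E\}$, while $g(A\chi)$ is interchanged with $g(\bar\chi)$; hence the entire Gauss-sum product is invariant and only the twist changes, $\chi(-1)\mapsto A(-1)\chi(-1)$. Thus $S(\bar A\bar\chi)=A(-1)\,S(\chi)$, and reindexing gives $\sum_\chi S(\chi)=A(-1)\sum_\chi S(\chi)$. Since $A(-1)=-1$ precisely when $A\neq\square$, the sum vanishes there, which is the first branch. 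When $A=\square$ this substitution is instead an involution whose fixed characters are the inverse square roots of $A$, and this is ultimately what produces the sum $\sum_{R^2=A}$.

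For $A=\square$ I would isolate the dependence on $E$, the only feature distinguishing this statement from Theorem~\ref{cor_5}. A direct check shows that $S(\chi)$ equals the summand of the well-poised ${}_4F_3(A,B,C,D;A\bar B,A\bar C,A\bar D\mid -1)_q^\star$ of Theorem~\ref{cor_5}, multiplied by $g(E\chi)g(\bar AE\bar\chi)/\bigl(g(E)g(\bar AE)\bigr)$; the switch of argument from $1$ to $-1$ is exactly compensated by the drop in the parity factor $\chi(-1)^{n+1}$ when the pair $(E;A\bar E)$ is removed. Next I would write $g(E\chi)g(\bar AE\bar\chi)=g(\bar AE^{2})\,J(E\chi,\bar AE\bar\chi)$, where $J$ denotes the Jacobi sum (a finite-field Euler-integral step that raises the order by one), expand $J$ as a character sum over $\mathbb{F}_q$, and interchange the order of summation. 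Applying the Hasse--Davenport duplication relation $g(\psi)g(\psi\phi)=\bar\psi(4)\,g(\psi^{2})\,g(\phi)$ to the $A$-factors introduces the square roots $R$ of $A$, and together with the reflection formula $g(\psi)g(\bar\psi)=\psi(-1)q$ this should reassemble the generic part of the sum into the first summand on the right-hand side.

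The crux, and the feature absent in the classical setting, is the correction term. The reflection identity and the Jacobi-sum evaluation are valid only away from the trivial character, where instead $g(\varepsilon)=-1$; the finitely many $\chi$ at which $E\chi$, $\bar AE\bar\chi$, or $\bar AE^{2}$ becomes trivial are therefore not captured by the generic reassembly and must be restored by hand. I expect the main obstacle to be isolating these exceptional characters, evaluating the degenerate Gauss sums with the correct constants, and verifying that their total collapses to the well-poised ${}_2F_1(A,B;A\bar B\mid -1)_q^\star$ times the stated prefactor, the lone surviving factor of $q$ being the signature of a single reflection applied at $\varepsilon$. This boundary phenomenon is the finite-field shadow of the termination hypothesis in Whipple's Theorem~\ref{thm_Whipple_5F4}: that hypothesis forces the classical series to terminate and annihilates the analogous defect, which is exactly why Theorem~\ref{cor_5}---the analogue of the termination-free Theorem~\ref{thm_Whipple_4F3}---carries no correction while the present statement does. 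Throughout, the hypotheses $A\neq\varepsilon$, $B\neq\varepsilon$, $B^{2}\neq A$, and $CD,CE,DE,CDE\neq A$ are precisely the conditions keeping the relevant Gauss sums nonzero and the reflection and duplication steps valid, and I would track where each is used.
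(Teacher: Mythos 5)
Your handling of the first branch is correct and is genuinely different from the paper: the substitution $\chi \mapsto \bar{A}\bar{\chi}$ permutes the Gauss-sum factors exactly as you say, giving $S(\bar{A}\bar{\chi})=A(-1)S(\chi)$ and hence vanishing when $A\neq\square$ (for odd $q$; for even $q$ the branch is vacuous), whereas the paper obtains this inductively from Theorem \ref{thm_7} (Corollary \ref{cor_7}). But in the main case there is a genuine gap at the reassembly step. Peeling off only the single pair $(E,A\bar{E})$ via $g(E\chi)g(\bar{A}E\bar{\chi})=g(\bar{A}E^2)J(E\chi,\bar{A}E\bar{\chi})$ and expanding the Jacobi sum is the finite-field Euler integral: after interchanging summation you are left with a sum over $t\in\mathbb{F}_q$ of well-poised ${}_4F_3(A,B,C,D;A\bar{B},A\bar{C},A\bar{D}\mid\cdot)$ data at an argument that varies with $t$, and no transformation of the well-poised ${}_4F_3$ at general argument is available --- Theorem \ref{cor_5} covers only argument $-1$. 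The Hasse--Davenport product relation cannot rescue this: it ties $g(\psi)$, $g(\phi\psi)$ and $g(\psi^2)$ together, and it has no mechanism to produce either the lower parameter $\bar{A}CDE$ of the target ${}_4F_3$ or the prefactor $g(\bar{A}DE)g(\bar{A}CD)g(\bar{A}CE)/g(\bar{A}CDE)$. In the paper both arise from the ${}_2F_1$-summation analogue (Theorem \ref{thm_HP}) used twice: once \emph{in reverse}, through Theorem \ref{thm_7}, which peels the $D$- and $E$-pairs \emph{simultaneously} as a four-Gauss-sum block --- precisely what flips the argument from $1$ to $-1$ so that Theorem \ref{thm_5} applies to the inner ${}_4F_3$ --- and once forward, to collapse the inner $\psi$-sum after expanding the resulting ${}_3F_2$ by definition and shifting $\psi\to\chi\psi$ (this is where $CD\neq A$ and $CE\neq A$ enter). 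Moreover the square roots $R$ enter through the orthogonality evaluation of Proposition \ref{thm_2} inside Theorem \ref{thm_3}, not through duplication, and the ${}_2F_1$ correction is not an accumulation of finitely many exceptional $\chi$: it is the $\delta(\bar{A}C\bar{\psi})$ defect term of Theorem \ref{thm_5}, picked out at the single intermediate character $\psi=\bar{A}C$. Your heuristic tying the correction to Whipple's termination hypothesis is apt in spirit, but the mechanism is a delta term in an intermediate transformation, not a degenerate reflection at $\varepsilon$.

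Two further concrete defects in your bookkeeping. First, your Jacobi step requires $\bar{A}E^2\neq\varepsilon$, i.e.\ $E^2\neq A$, which is \emph{not} among the hypotheses (only $B^2\neq A$ is assumed); when $E^2=A$ the relation between the $E$-pair and the Jacobi sum takes the other branch of (\ref{for_JactoGauss}) globally in $\chi$, not merely at finitely many exceptional characters, so this degeneracy cannot be ``restored by hand'' in the way you describe --- and a correct proof must not need $E^2\neq A$ at all, as the theorem holds without it. Second, the hypotheses are not ``conditions keeping the relevant Gauss sums nonzero'': Gauss sums never vanish ($|g(\chi)|=\sqrt{q}$ for $\chi\neq\varepsilon$ and $g(\varepsilon)=-1$). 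In the paper's proof, $CD\neq A$ and $CE\neq A$ kill the delta term in the forward application of Theorem \ref{thm_HP}, $DE\neq A$ kills the $\delta(\bar{A}DE)$ term inherited from Theorem \ref{thm_7}, and $CDE\neq A$ permits the application of (\ref{for_GaussConj}) that converts $g(A\bar{C}\bar{D}\bar{E})/(q\,A\bar{C}\bar{D}\bar{E}(-1))$ into $1/g(\bar{A}CDE)$ when passing from Theorem \ref{thm_6} to the stated result.
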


\vspace{0.1in}

\noindent As we will see in Section \ref{sec_Proofs}, the results above are based on the following fundamental result which we use to relate well-poised functions of different orders. 
For $\chi \in \widehat{\mathbb{F}_q^{*}}$ define $\delta(\chi)$ to equal $1$ if $\chi$ is trivial and zero otherwise.
\begin{theorem}\label{thm_7}
For $2 \leq n \in \mathbb{Z}$ and $A_0,A_1,\dotsc, A_n \in \widehat{\mathbb{F}_q^{*}}$,
\begin{multline*}
{_{n+1}F_{n}} {\biggl( \begin{array}{cccc} A_0, & A_1, & \dotsc, & A_n \\
 \phantom{A_0,} & A_0\bar{A_1}, & \dotsc, & A_0\bar{A_n} \end{array}
\Big| \; x \biggr)}_{q}^{\star}
=
\frac{g(\bar{A_0}A_{n-1} A_n)}{g(A_{n-1}) g(A_n) g(\bar{A_0}A_{n-1}) g(\bar{A_0}A_{n})} \times\\
\frac{1}{q-1} \sum_{\psi \in \widehat{\mathbb{F}_q^{*}}} 
g(A_{n-1} \psi) g(A_n \psi) g(\bar{\psi}) g(\bar{A_0}\bar{\psi}) \;
{_{n}F_{n-1}} {\biggl( \begin{array}{ccccc} A_0, & A_1, & \dotsc, & A_{n-2}, & \bar{\psi} \\
 \phantom{A_0,} & A_0\bar{A_1}, & \dotsc, & A_0\bar{A_{n-2}}, & A_0 \psi \end{array}
\Big| \; {-x} \biggr)}_{q}^{\star}\\
+
\frac{q(q-1) A_nA_{n-1}(-1) \delta({\bar{A_0}A_{n-1} A_n})}{g(A_{n-1}) g(A_n) g(\bar{A_0}A_{n-1}) g(\bar{A_0}A_{n})} \;
{_{n-1}F_{n-2}} {\biggl( \begin{array}{cccc} A_0, & A_1, & \dotsc, & A_{n-2} \\
 \phantom{A_0,} & A_0\bar{A_1}, & \dotsc, & A_0\bar{A_{n-2}} \end{array}
\Big| \; x \biggr)}_{q}^{\star}
\end{multline*}
\end{theorem}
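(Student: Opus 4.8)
The plan is to start from the right-hand side, expand the inner ${_{n}F_{n-1}}(\cdots)^{\star}$ by Definition \ref{def_HypFnFF}, and interchange the $\psi$-summation with the inner character summation. Writing $\chi$ for the inner character, the factors $g(\bar{\psi})$ and $g(\bar{A_0}\bar{\psi})$ produced by the prefactor in the first term cancel precisely against the normalizing denominators attached to the parameters $\bar{\psi}$ (upper) and $A_0\psi$ (lower) of ${_{n}F_{n-1}}(\cdots)^{\star}$, leaving the surviving factors $g(\bar{\psi}\chi)\,g(\bar{A_0}\bar{\psi}\bar{\chi})$. Using $\chi(-1)^{n}\chi(-x)=\chi(-1)^{n+1}\chi(x)$ and $\chi(-1)^{n-1}=\chi(-1)^{n+1}$, one checks that the ``core'' factor
\[
C(\chi):=\Bigl[\prod_{i=0}^{n-2}\tfrac{g(A_i\chi)}{g(A_i)}\Bigr]\Bigl[\prod_{j=1}^{n-2}\tfrac{g(\bar{A_0}A_j\bar{\chi})}{g(\bar{A_0}A_j)}\Bigr]g(\bar{\chi})\,\chi(-1)^{n+1}\chi(x)
\]
is identical to the corresponding core of the left-hand side and to the summand of ${_{n-1}F_{n-2}}(\cdots\mid x)^{\star}$. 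Since every term then has the shape $\tfrac{1}{q-1}\sum_{\chi}C(\chi)\cdot(\text{coefficient})$ with a $\chi$-independent coefficient, the theorem reduces to the single per-$\chi$ Gauss-sum identity
\[
T(\chi)=g(\bar{A_0}A_{n-1}A_n)\,S(\chi)+q(q-1)\,A_nA_{n-1}(-1)\,\delta(\bar{A_0}A_{n-1}A_n),\tag{$\ast$}
\]
where $T(\chi):=g(A_{n-1}\chi)g(A_n\chi)g(\bar{A_0}A_{n-1}\bar{\chi})g(\bar{A_0}A_n\bar{\chi})$ and $S(\chi):=\tfrac{1}{q-1}\sum_{\psi}g(A_{n-1}\psi)g(A_n\psi)g(\chi\bar{\psi})g(\bar{A_0}\bar{\chi}\bar{\psi})$.

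The identity $(\ast)$ is a finite field analogue of Barnes' first lemma and is the heart of the argument. First I would evaluate $S(\chi)$ in the generic range. Writing $J(\alpha,\beta):=\sum_{t}\alpha(t)\beta(1-t)$ for the Jacobi sum and pairing the four Gauss sums via $g(\rho)g(\sigma)=J(\rho,\sigma)g(\rho\sigma)$ — grouping $g(A_{n-1}\psi)$ with $g(\chi\bar{\psi})$ and $g(A_n\psi)$ with $g(\bar{A_0}\bar{\chi}\bar{\psi})$ — turns $S(\chi)$ into $g(A_{n-1}\chi)g(\bar{A_0}A_n\bar{\chi})$ times a double character sum over the two Jacobi-sum variables $t,s$. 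Summing over $\psi$ forces $\tfrac{t}{1-t}\cdot\tfrac{s}{1-s}=1$, hence $s=1-t$, collapsing the double sum to the single Jacobi sum $J(\bar{A_0}A_{n-1}\bar{\chi},\,A_n\chi)$; applying $J(\lambda,\mu)=g(\lambda)g(\mu)/g(\lambda\mu)$ yields $S(\chi)=T(\chi)/g(\bar{A_0}A_{n-1}A_n)$, which is exactly $(\ast)$ when $\bar{A_0}A_{n-1}A_n\neq\varepsilon$ (the $\delta$-term then vanishing). Equivalently, and more transparent for bookkeeping the exceptional terms, I could expand each Gauss sum by its definition and use orthogonality on the $\psi$-sum to reduce $S(\chi)$ to $\sum_{ab=cd}A_{n-1}(a)A_n(b)\chi(c)(\bar{A_0}\bar{\chi})(d)\,\theta(a+b+c+d)$ and evaluate this constrained sum directly.

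The main obstacle is the degenerate case $\bar{A_0}A_{n-1}A_n=\varepsilon$, i.e. $A_0=A_{n-1}A_n$, where the manipulations above fail: the final Jacobi sum becomes $J(\lambda,\bar{\lambda})=-\lambda(-1)$ rather than $g(\lambda)g(\bar{\lambda})/g(\varepsilon)$, and the pairing $g(\rho)g(\sigma)=J(\rho,\sigma)g(\rho\sigma)$ is invalid at the finitely many $\psi$ for which one of $A_{n-1}\psi,\,A_n\psi,\,\chi\bar{\psi},\,\bar{A_0}\bar{\chi}\bar{\psi}$ is trivial. Here $\bar{A_0}A_{n-1}=\bar{A_n}$ and $\bar{A_0}A_n=\bar{A_{n-1}}$, so for generic $\chi$ the pairings $g(A_{n-1}\chi)g(\bar{A_{n-1}}\bar{\chi})=(A_{n-1}\chi)(-1)q$ and its companion give $T(\chi)=q^{2}(A_{n-1}A_n)(-1)$, forcing $S(\chi)=-q\,(A_{n-1}A_n)(-1)$. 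The delicate point will be to isolate and re-evaluate the exceptional characters (both the exceptional $\psi$ in $S(\chi)$ and the exceptional inner values $\chi\in\{\bar{A_{n-1}},\bar{A_n}\}$ where $T(\chi)$ deviates) using $g(\phi)g(\bar{\phi})=\phi(-1)q$ for $\phi\neq\varepsilon$ and $g(\varepsilon)=-1$, and to show that the deviations of $T(\chi)$ and of $g(\bar{A_0}A_{n-1}A_n)S(\chi)=-S(\chi)$ away from their constant values cancel exactly, leaving the $\chi$-independent constant $q(q-1)(A_{n-1}A_n)(-1)$ demanded by $(\ast)$. Combining this constant with the identification ${_{n-1}F_{n-2}}(\cdots\mid x)^{\star}=\tfrac{1}{q-1}\sum_{\chi}C(\chi)$ from the first paragraph then reproduces precisely the $\delta$-term in the statement, completing the proof.
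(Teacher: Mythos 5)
Your proposal is correct and follows essentially the paper's own route: your key identity $(\ast)$ is precisely Theorem \ref{thm_HP} (Helversen-Pasotto) rearranged --- multiply through by $g(\bar{A_0}A_{n-1}A_n)$ and use $g(\varepsilon)=-1$ when $\bar{A_0}A_{n-1}A_n=\varepsilon$ --- and the paper's proof consists of exactly this substitution of Theorem \ref{thm_HP} ``in reverse'' into the defining sum, with the same bookkeeping of signs ($\chi(-1)^{n+1}\chi(x)=\chi(-1)^{n}\chi(-x)$) and cancelling normalizers; that you work RHS$\to$LHS rather than LHS$\to$RHS is immaterial. Consequently the re-derivation of the finite-field Barnes lemma that you budget for, including the delicate degenerate case $A_0=A_{n-1}A_n$ and the exceptional values of $\chi$, is unnecessary: Theorem \ref{thm_HP} is stated unconditionally, so you may simply cite it (your generic Jacobi-sum collapse of the $\psi$-sum is just its standard proof).
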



\noindent Based on Theorem \ref{thm_7} we will show by induction that all well-poised functions of the form ${_{n+1}F_{n}}(\cdots | ({-1})^n)^{\star}$ equal zero if the leading top line parameter is not a square.
\begin{cor}\label{cor_7}
 For $0 \leq n \in \mathbb{Z}$ and characters $A_0,A_1,\dotsc, A_n \in \widehat{\mathbb{F}_q^{*}}$ such that $A_0$ is not a square,
\begin{equation*}
{_{n+1}F_{n}} {\biggl( \begin{array}{cccc} A_0, & A_1, & \dotsc, & A_n \\
 \phantom{A_0,} & A_0\bar{A_1}, & \dotsc, & A_0\bar{A_n} \end{array}
\Big| \; (-1)^n \biggr)}_{q}^{\star}
=0.
\end{equation*}
\end{cor}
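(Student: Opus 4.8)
The plan is to prove the vanishing directly from Definition \ref{def_HypFnFF} by means of a single character substitution, without inducting through Theorem \ref{thm_7}. Since every Gauss sum is nonzero, the constant factor $\frac{1}{(q-1)\prod_{i=0}^{n} g(A_i)\prod_{j=1}^{n} g(\bar{A_0}A_j)}$ standing in front of the defining sum is harmless, so it suffices to show that the numerator character sum vanishes. Specializing to the well-poised case $B_j=A_0\bar{A_j}$ (so that $\bar{B_j\chi}=\bar{A_0}A_j\bar\chi$) and $x=(-1)^n$, and simplifying the sign factor via $\chi(-1)^{n+1}\chi((-1)^n)=\chi(-1)^{2n+1}=\chi(-1)$, this numerator becomes
\[
\Sigma_n := \sum_{\chi \in \widehat{\mathbb{F}_q^{*}}} g(A_0\chi)\Bigl(\prod_{i=1}^{n} g(A_i\chi)\Bigr)\Bigl(\prod_{j=1}^{n} g(\bar{A_0}A_j\bar\chi)\Bigr) g(\bar\chi)\,\chi(-1).
\]

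First I would record the standard fact that, for $q$ odd, a character $A_0$ is a square in $\widehat{\mathbb{F}_q^{*}}$ precisely when $A_0(-1)=1$; equivalently, $A_0$ is a non-square exactly when $A_0(-1)=-1$ (the map $A\mapsto A(-1)$ is a homomorphism onto $\{\pm1\}$ whose kernel is the index-two subgroup of squares). When $q$ is even every character is a square, so the hypothesis is vacuous and there is nothing to prove. Thus the whole statement reduces to showing $\Sigma_n=0$ whenever $A_0(-1)=-1$.

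The key step is the bijective substitution $\chi\mapsto\chi'=\bar{A_0}\bar\chi$ on $\widehat{\mathbb{F}_q^{*}}$. Writing $\chi=\bar{A_0}\bar{\chi'}$, one checks factor by factor that $g(A_0\chi)=g(\bar{\chi'})$, that $g(A_i\chi)=g(\bar{A_0}A_i\bar{\chi'})$ and $g(\bar{A_0}A_i\bar\chi)=g(A_i\chi')$ for $1\le i\le n$, that $g(\bar\chi)=g(A_0\chi')$, and that $\chi(-1)=A_0(-1)\chi'(-1)$. The point is that the well-poised pairing sends each top factor $g(A_i\chi)$ to its bottom partner $g(\bar{A_0}A_i\bar{\chi'})$ and back, while the leading factor $g(A_0\chi)$ swaps with the extra factor $g(\bar\chi)$ present in the definition; hence the full multiset of $2n+2$ Gauss-sum factors is preserved and the summand is reproduced exactly, multiplied only by the scalar $A_0(-1)$. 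Summing over $\chi'$ therefore gives $\Sigma_n=A_0(-1)\,\Sigma_n$. When $A_0$ is a non-square we have $A_0(-1)=-1$, so $\Sigma_n=-\Sigma_n$, forcing $\Sigma_n=0$, which is the claim; this also supplies the vanishing invoked in the ``$A\neq\square$'' cases of Theorems \ref{cor_5} and \ref{cor_6}.

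The main obstacle is purely the bookkeeping in the substitution step: one must verify that the images of the $2n+2$ Gauss-sum arguments really do constitute the same collection as the originals, which is exactly where the well-poised relation $B_j=A_0\bar{A_j}$ (together with the $g(\bar\chi)$ pairing off against $g(A_0\chi)$) is used; everything else is routine. Alternatively, following the route flagged just before the statement, one can argue by two-step strong induction on $n$: the base cases $n=0,1$ are dispatched by the substitution above, and for $n\ge 2$ one applies Theorem \ref{thm_7} with $x=(-1)^n$, observing that its right-hand side involves only the well-poised functions ${_{n}F_{n-1}}(\cdots\,|\,(-1)^{n-1})^{\star}$ (inside the $\psi$-sum) and ${_{n-1}F_{n-2}}(\cdots\,|\,(-1)^{n-2})^{\star}$, each still having the non-square leading parameter $A_0$ and hence vanishing by the inductive hypothesis.
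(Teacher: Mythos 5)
Your proposal is correct, and its primary argument is genuinely different from the paper's. The paper derives Corollary \ref{cor_7} by assembling previously proved results: the case $n=0$ follows from Proposition \ref{thm_2}, the cases $n=1,2,3,4$ are read off from the full-strength Theorems \ref{thm_3}, \ref{thm_4}, \ref{thm_5} and \ref{thm_6} (each of which records the value $0$ when the leading parameter is not a square), and the general case follows by induction on $n$ via Theorem \ref{thm_7} --- exactly the two-step induction you sketch as your fallback, where the $\psi$-sum term carries argument $-x=(-1)^{n-1}$ and still has the well-poised shape with leading parameter $A_0$ (top entry $\bar{\psi}$ paired with bottom entry $A_0\psi$), while the $\delta$-term has argument $(-1)^n=(-1)^{n-2}$. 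Your main route instead proves the vanishing uniformly for all $n\geq 0$ in one stroke: the substitution $\chi \mapsto \bar{A_0}\bar{\chi}$ is an involution of $\widehat{\mathbb{F}_q^{*}}$, and your factor-by-factor bookkeeping is right --- $g(A_0\chi)$ and $g(\bar{\chi})$ swap, each well-poised pair $g(A_i\chi)$, $g(\bar{A_0}A_i\bar{\chi})$ swaps internally, and the sign bookkeeping $\chi(-1)^{n+1}\chi((-1)^n)=\chi(-1)$ together with $\chi(-1)=A_0(-1)\chi'(-1)$ yields $\Sigma_n = A_0(-1)\Sigma_n$; combined with the correct parity criterion ($A_0$ is a non-square if and only if $A_0(-1)=-1$ for odd $q$, with the even-$q$ case vacuous) this forces $\Sigma_n=0$. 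What each approach buys: the paper gets the corollary essentially for free, since Theorem \ref{thm_7} and the low-order theorems are needed anyway for the main transformations, whereas your symmetry argument is self-contained (no Helversen-Pasotto identity, no Theorem \ref{thm_7}), works at no extra cost for every $n$ simultaneously, and exposes the conceptual reason for the vanishing, namely that the well-poised summand is odd under the involution $\chi \mapsto \bar{A_0}\bar{\chi}$ precisely when $A_0$ is a non-square; it also strengthens the inductive route by supplying the base cases $n=0,1$ without appeal to Theorems \ref{thm_3}--\ref{thm_6}.
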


We also have other results which are analogues of various classical summation formulas.
The first of these is an analogue of Gauss' Theorem \ref{thm_Gauss} above and can easily be derived from a character sum evaluation of Helversen-Pasotto (see Corollary \ref{cor_HP} in Section 2). 
\begin{theorem}[Helversen-Pasotto \cite{Hp1}]\label{thm_1a}
For $A$, $B$, $C \in \widehat{\mathbb{F}_q^{*}}$ such that $AB \neq C$,
\begin{equation*}
{_{2}F_1} \biggl( \begin{array}{cc} A, & B \vspace{.02in}\\
\phantom{A} & C \end{array}
\Big| \; 1 \biggr)_{q}^{\star}
= \frac{g(A\bar{C}) \, g(B\bar{C})}{ g(\bar{C}) \, g(AB \bar{C})}.
\end{equation*}
\end{theorem}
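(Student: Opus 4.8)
The plan is to reduce the statement to a single character-sum evaluation by specializing Definition \ref{def_HypFnFF}. First I would set $n=1$ with $A_0 = A$, $A_1 = B$, $B_1 = C$, and $x = 1$, so that the character factor $\chi(-1)^{n+1}\chi(x) = \chi(-1)^2 = 1$ for every $\chi$. Pulling the $\chi$-independent Gauss sums out of the sum and using $g(\bar{C\chi}) = g(\bar{C}\bar{\chi})$ turns the left-hand side into
\begin{equation*}
{_{2}F_1}\biggl( \begin{array}{cc} A, & B \vspace{.02in}\\ \phantom{A} & C \end{array} \Big|\; 1 \biggr)_q^{\star} = \frac{1}{(q-1)\,g(A)\,g(B)\,g(\bar{C})}\sum_{\chi \in \widehat{\mathbb{F}_q^{*}}} g(A\chi)\,g(B\chi)\,g(\bar{C}\bar{\chi})\,g(\bar{\chi}).
\end{equation*}
Everything then rests on evaluating the four-Gauss-sum character sum on the right.

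Next I would recognize this sum as an instance of the finite-field Barnes-type identity of Helversen-Pasotto recorded as Corollary \ref{cor_HP}. Writing that evaluation as $\sum_\chi g(A\chi)g(B\chi)g(C'\bar{\chi})g(D'\bar{\chi})$, I take $C' = \bar{C}$ and $D' = \varepsilon$ the trivial character, noting $g(\varepsilon\bar{\chi}) = g(\bar{\chi})$. With these choices the governing product is $A\,B\,C'\,D' = AB\bar{C}$, and the four main-term Gauss sums become $g(AC') = g(A\bar{C})$, $g(AD') = g(A)$, $g(BC') = g(B\bar{C})$, and $g(BD') = g(B)$.

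The hypothesis enters here: the Helversen-Pasotto evaluation carries a degenerate correction term supported on $\delta(AB C' D') = \delta(AB\bar{C})$, and the assumption $AB \neq C$ is exactly the condition $AB\bar{C} \neq \varepsilon$ that removes this term. Only the main term survives, so in the normalization of Corollary \ref{cor_HP},
\begin{equation*}
\sum_{\chi \in \widehat{\mathbb{F}_q^{*}}} g(A\chi)\,g(B\chi)\,g(\bar{C}\bar{\chi})\,g(\bar{\chi}) = (q-1)\,\frac{g(A\bar{C})\,g(A)\,g(B\bar{C})\,g(B)}{g(AB\bar{C})}.
\end{equation*}
Substituting this into the displayed expression above, the factors $g(A)$ and $g(B)$ cancel against the denominator, leaving $\dfrac{g(A\bar{C})\,g(B\bar{C})}{g(\bar{C})\,g(AB\bar{C})}$, which is the claim.

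I expect no genuine obstacle once Corollary \ref{cor_HP} is available, since the computation is a direct substitution. The only points requiring care are bookkeeping ones: checking the harmless vanishing of the factor $\chi(-1)^{n+1}\chi(x)$ at $x=1$, correctly specializing one Helversen-Pasotto parameter to the trivial character $\varepsilon$ (so $g(\varepsilon) = -1$ and the pairing of parameters is respected), and confirming that the single hypothesis $AB \neq C$ is precisely what kills the degenerate term. The real work lies upstream, in establishing the Helversen-Pasotto character-sum evaluation itself, which is assumed here as Corollary \ref{cor_HP}.
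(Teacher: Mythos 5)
Your proposal is correct and takes essentially the same route as the paper: the paper likewise notes that Theorem \ref{thm_1a} is the $x=1$, $n=1$ specialization of Definition \ref{def_HypFnFF} (where $\chi(-1)^{n+1}\chi(x)\equiv 1$), which is precisely the left-hand side of Corollary \ref{cor_HP}, itself obtained from Theorem \ref{thm_HP} by setting $D=\varepsilon$, replacing $C$ by $\bar{C}$, and dividing by $g(A)\,g(B)\,g(\bar{C})$, with the hypothesis $AB\neq C$ exactly eliminating the $\delta(AB\bar{C})$ correction term. Your bookkeeping (including $g(\bar{C\chi})=g(\bar{C}\bar{\chi})$ and the cancellation of $g(A)$ and $g(B)$) matches the paper's derivation in full.
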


\noindent The next two results are analogues of Kummer's theorem (see \cite{Ba}  2.3 (1)) and  Dixon's theorem (see \cite{Ba} 3.1 (1)) respectively.  
We note that they can be derived from (4.11) and Theorem 4.37 of Greene \cite{G}, via Proposition \ref {prop_FtoGreene} for most choices of parameters and on a case by case basis otherwise. In turn, the latter result of Greene is closely related to a character sum evaluation of Evans \cite{E2}. However as we will see in Section \ref{sec_Proofs} our method of proof is different. 
\begin{theorem}[cf. Greene \cite{G}]\label{cor_3}
For $A$, $B \in \widehat{\mathbb{F}_q^{*}}$ such that $A \neq \varepsilon$,
\begin{equation*}
{_{2}F_1} \biggl( \begin{array}{cc} A, & B \vspace{.02in}\\
\phantom{A} & A\bar{B} \end{array}
\Big| \; {-1} \biggr)_{q}^{\star}
=
\begin{cases}
0 & \textup{if $A \neq \square$,}\\[4pt]
\displaystyle\sum_{R^2=A} \dfrac{ g(\bar{A}) \, g(\bar{R}B)}{g(\bar{R}) \, g(\bar{A}B)} 
& \textup{otherwise.}
\end{cases}
\end{equation*}
\end{theorem}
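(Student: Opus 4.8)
The plan is to prove both cases simultaneously by expanding Definition~\ref{def_HypFnFF} into a multiplicative character sum and then reducing it, via additive orthogonality, to a single one-dimensional sum. Specializing the definition to $n=1$, $x=-1$, $B_1=A\bar B$, and using $\chi(-1)^{2}\chi(-1)=\chi(-1)$, gives
\begin{multline*}
{_{2}F_1}\biggl(\begin{array}{cc}A,&B\\ &A\bar B\end{array}\Big|\,{-1}\biggr)_q^{\star}\\
=\frac{1}{(q-1)\,g(A)\,g(B)\,g(\bar A B)}\sum_{\chi\in\widehat{\mathbb{F}_q^{*}}} g(A\chi)\,g(B\chi)\,g(\bar A B\bar\chi)\,g(\bar\chi)\,\chi(-1).
\end{multline*}
First I would write each Gauss sum as $g(\psi)=\sum_{x}\psi(x)\theta(x)$ and perform the sum over $\chi$ by orthogonality; this forces the four field variables to satisfy a single multiplicative relation and collapses the expression to $(q-1)\sum_{a,b,c}A(a)B(b)\bar A(c)B(c)\,\theta(a+b+c-ab/c)$ over $a,b,c\in\mathbb{F}_q^{*}$.

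Next I would evaluate the inner additive sums one variable at a time: the sum over $b$ is a twisted Gauss sum contributing $g(B)\bar B(c-a)B(c)$, and after the substitution $a=ct$ the sum over $c$ contributes a second factor $g(B)$ and leaves the multiplicative weight $\bar B(1-t)\bar B(1+t)=\bar B(1-t^{2})$. Granting for the moment that $B\neq\varepsilon$ (so that the degenerate terms drop), this reduces everything to
\begin{equation*}
(q-1)\,g(B)^{2}\,U,\qquad U:=\sum_{t\in\mathbb{F}_q}A(t)\,\bar B(1-t^{2}),
\end{equation*}
where the excluded values $t=0,\pm1$ contribute nothing since $A(0)=\bar B(0)=0$.

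The crux is the evaluation of $U$ through the squaring map $t\mapsto t^{2}$. For each $v$ one has $\sum_{t^{2}=v}A(t)=R(v)\bigl(1+\phi(v)\bigr)$ when $A=R^{2}$ is a square, $\phi$ being the quadratic character, while this inner sum vanishes identically when $A$ is a non-square (because then $A(-1)=-1$). Thus $U=0$ in the non-square case, giving the first case of the theorem at once; and in the square case, grouping the remaining sum over $v$ by the argument $1-v$ identifies $U$ with a sum of Jacobi sums, $U=J(R,\bar B)+J(R\phi,\bar B)=\sum_{R^{2}=A}J(R,\bar B)$, the characters $R$ and $R\phi$ being precisely the two square roots of $A$. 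Converting these back with $J(\rho,\lambda)=g(\rho)g(\lambda)/g(\rho\lambda)$ and simplifying the constant by repeated use of $g(\psi)g(\bar\psi)=\psi(-1)q$ together with $A(-1)=1$ (as $A=\square$) and $q/g(A)=g(\bar A)$ turns $\tfrac{g(B)}{g(A)g(\bar A B)}\sum_{R^{2}=A}J(R,\bar B)$ into the stated $\sum_{R^{2}=A}\tfrac{g(\bar A)\,g(\bar R B)}{g(\bar R)\,g(\bar A B)}$.

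I expect the main obstacle to be this squaring-map step, where the quadratic character and the square/non-square dichotomy enter, and the accompanying bookkeeping of the degenerate parameter choices. Specifically, the reductions above tacitly assumed $B\neq\varepsilon$, and the passage $J(\rho,\lambda)=g(\rho)g(\lambda)/g(\rho\lambda)$ fails when $\rho\lambda=\varepsilon$; so the cases $B=\varepsilon$, $R=B$ (equivalently $B^{2}=A$), and $\bar A B=\varepsilon$ must be checked separately, using the exceptional value $J(\chi,\bar\chi)=-\chi(-1)$ and verifying directly that the dropped additive terms indeed vanish. Throughout, the hypothesis $A\neq\varepsilon$ is exactly what guarantees that $g(\bar A)$, the product formula for $g(A)g(\bar A)$, and the value $A(-1)$ all behave as the argument requires; the characteristic-two case is degenerate (every character has a unique square root and $-1=1$) and is handled by the same computation with the sum over $R$ reduced to a single term.
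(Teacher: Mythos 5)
Your argument is essentially correct, but it takes a genuinely different route from the paper. The paper first proves the fully unrestricted Theorem~\ref{thm_3}: it expands the ${_2F_1}(\cdots)^{\star}$ by Definition~\ref{def_HypFnFF}, rewrites the product $g(A\chi)g(B\chi)g(\bar{A}B\bar\chi)g(\bar\chi)$ by applying the Helversen-Pasotto identity (Theorem~\ref{thm_HP}) \emph{in reverse}, interchanges the two character sums, and collapses the inner sum with Proposition~\ref{thm_2}, so the square/non-square dichotomy emerges from the fact that $\frac{1}{q-1}\sum_{\chi} g(\bar\psi\chi)\,g(\bar{A\psi\chi})\,\chi(-1)$ vanishes unless $\psi^{2}=\bar{A}$; Theorem~\ref{cor_3} then follows from a single application of (\ref{for_GaussConj}). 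You instead unfold all four Gauss sums into additive characters, collapse by orthogonality to the one-dimensional sum $U=\sum_{t}A(t)\bar{B}(1-t^{2})$, and evaluate $U$ via the squaring map and Jacobi sums --- essentially the direct classical evaluation, close in spirit to the route through Greene's (4.11) and Evans's character-sum identities that the paper explicitly mentions as an alternative derivation. Each approach buys something: yours is elementary and self-contained, using only orthogonality and the basic Gauss--Jacobi relations; the paper's stays entirely within Gauss sums until the last step, so its Theorem~\ref{thm_3} holds with \emph{no} hypotheses on $B$ and none of your exceptional cases ever arises, and, more importantly, the same reversal trick is the engine behind Theorem~\ref{thm_7} and all the higher-order transformations, whereas your direct method is specific to this one evaluation.

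One caveat on your deferred cases: when $B=\varepsilon$, the terms you discard (the $a=c$ term in the $b$-sum and the $t=-1$ term in the $c$-sum) do \emph{not} vanish --- each is a full sum of size $q-1$ --- so your phrase ``verifying directly that the dropped additive terms indeed vanish'' is wrong for that case; it requires its own short computation rather than a vanishing check. The case does confirm the theorem (both sides equal the number of square roots of $A$), and your other exceptional cases $B^{2}=A$ and $B=A$ do go through as sketched via $J(\chi,\bar\chi)=-\chi(-1)$, but as written this is a gap in bookkeeping rather than a fully completed proof; the paper's formulation avoids the issue altogether.
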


\begin{theorem}[cf. Greene \cite{G}, cf. Evans \cite{E2}]\label{cor_4}
For $A$, $B$, $C \in \widehat{\mathbb{F}_q^{*}}$ such that $A \neq \varepsilon$ and $(BC)^2 \neq A$,
\begin{equation*}
{_{3}F_2} \biggl( \begin{array}{ccc} A, & B, & C \vspace{.02in}\\
\phantom{A} & A\bar{B}, & A\bar{C} \end{array}
\Big| \; 1 \biggr)_{q}^{\star}
=
\begin{cases}
0 & \textup{if $A \neq \square$,}\\[4pt]
\displaystyle\sum_{R^2=A} \dfrac{ g(\bar{A}) \, g(\bar{R}B) \, g(\bar{R}C) \, g(\bar{A}BC)}{g(\bar{R}) \, g(\bar{A}B) \, g(\bar{A}C) \, g(\bar{R}BC)} 
& \textup{otherwise.}
\end{cases}
\end{equation*}
\end{theorem}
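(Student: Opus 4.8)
The plan is to derive this well-poised ${_3F_2}$ summation from the reduction formula of Theorem \ref{thm_7}, feeding the inner function into the Kummer-type evaluation of Theorem \ref{cor_3} and then resumming with the Gauss-type evaluation of Theorem \ref{thm_1a}. Since $(-1)^2=1$, the left-hand side is a well-poised ${_3F_2}(\cdots\,|\,(-1)^2)^{\star}$ with leading parameter $A$, so when $A\neq\square$ Corollary \ref{cor_7} returns $0$ immediately. It therefore remains to treat the case $A=\square$ (in which, by hypothesis, $A\neq\varepsilon$).

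In that case I would apply Theorem \ref{thm_7} with $n=2$, $A_0=A$, $A_1=B$, $A_2=C$ and $x=1$. The trailing ${_1F_0}$ term is a scalar multiple of ${_1F_0}(A\,|\,1)^{\star}$, and a one-line orthogonality computation straight from Definition \ref{def_HypFnFF} gives
\[ {_1F_0}(A\,|\,1)^{\star}=\frac{1}{(q-1)\,g(A)}\sum_{\chi}g(A\chi)g(\bar{\chi})\chi(-1)=\frac{1}{g(A)}\sum_{u\neq 0}A(u)=0, \]
because $A\neq\varepsilon$; hence the $\delta(\bar{A}BC)$ term disappears (its potential presence when $BC=A$ never contributes). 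What survives is the statement that ${_3F_2}(A,B,C;A\bar{B},A\bar{C}\,|\,1)^{\star}$ equals $g(\bar{A}BC)\big/\bigl(g(B)g(C)g(\bar{A}B)g(\bar{A}C)\bigr)$ times the character sum $\tfrac{1}{q-1}\sum_{\psi}g(B\psi)g(C\psi)g(\bar{\psi})g(\bar{A}\bar{\psi})\,{_2F_1}(A,\bar{\psi};A\psi\,|\,{-1})^{\star}$.

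The inner ${_2F_1}(A,\bar{\psi};A\psi\,|\,{-1})^{\star}$ is well-poised at $-1$, so (using $A=\square$, $A\neq\varepsilon$) Theorem \ref{cor_3} evaluates it as $\sum_{R^2=A}g(\bar{A})g(\bar{R}\bar{\psi})\big/\bigl(g(\bar{R})g(\bar{A}\bar{\psi})\bigr)$. I would substitute this, interchange the $\psi$- and $R$-summations, and cancel the factor $g(\bar{A}\bar{\psi})$ appearing in both; for each square root $R$ this leaves the residual sum $\tfrac{1}{q-1}\sum_{\psi}g(B\psi)g(C\psi)g(\bar{\psi})g(\bar{R}\bar{\psi})$, weighted by $g(\bar{A})/g(\bar{R})$ and the overall prefactor. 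The crux is to recognise, again from Definition \ref{def_HypFnFF}, that
\[ \frac{1}{q-1}\sum_{\psi}g(B\psi)g(C\psi)g(\bar{\psi})g(\bar{R}\bar{\psi})=g(B)g(C)g(\bar{R})\,{_2F_1}(B,C;R\,|\,1)^{\star}, \]
so the residual $\psi$-sum reassembles into a (no longer well-poised) ${_2F_1}$ at argument $1$. Because $(BC)^2\neq A=R^2$ forces $BC\neq R$ for both square roots, Theorem \ref{thm_1a} applies and yields ${_2F_1}(B,C;R\,|\,1)^{\star}=g(\bar{R}B)g(\bar{R}C)\big/\bigl(g(\bar{R})g(\bar{R}BC)\bigr)$. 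Cancelling the reassembly factor $g(B)g(C)g(\bar{R})$ against the weight $g(\bar{A})/g(\bar{R})$ and the $g(B)g(C)$ of the overall prefactor, and absorbing the remaining $R$-independent factors into the sum, collapses everything to $\sum_{R^2=A}g(\bar{A})g(\bar{R}B)g(\bar{R}C)g(\bar{A}BC)\big/\bigl(g(\bar{R})g(\bar{A}B)g(\bar{A}C)g(\bar{R}BC)\bigr)$, which is the asserted right-hand side.

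I expect the main obstacle to be precisely this reassembly: after inserting Kummer's formula and swapping the order of summation, one must observe that the surviving fourfold Gauss sum over $\psi$ is itself a ${_2F_1}$ evaluated at $1$, hence summable by Helversen-Pasotto. By comparison the vanishing in the non-square case, the disappearance of the $\delta$-term, and the Gauss-sum bookkeeping are all routine.
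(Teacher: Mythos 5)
Your proposal is correct and takes essentially the same route as the paper: apply Theorem \ref{thm_7} with $n=2$, dispose of the $\delta(\bar{A}BC)$-term via the vanishing of ${_1F_0}(A\,|\,1)^{\star}$ for $A\neq\varepsilon$, evaluate the inner ${_2F_1}(A,\bar{\psi};A\psi\,|\,{-1})^{\star}$ by the Kummer analogue (Theorem \ref{cor_3}), and resum the surviving fourfold Gauss sum in $\psi$ --- your reassembly into $g(B)g(C)g(\bar{R})\,{_2F_1}(B,C;R\,|\,1)^{\star}$ followed by Theorem \ref{thm_1a} is just Theorem \ref{thm_HP} with $D=\varepsilon$, which the paper applies directly (the paper also first proves the unrestricted Theorem \ref{thm_4}, tracking all the degenerate cases your hypotheses $A\neq\varepsilon$, $(BC)^2\neq A$ exclude, and then specializes). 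One small repair: do not cite Corollary \ref{cor_7} for the case $A\neq\square$, since in the paper's logical order its $n=2$ instance is deduced from this very theorem; instead observe that your own computation already yields the vanishing, because Theorem \ref{cor_3} makes the inner ${_2F_1}$ zero for every $\psi$ when $A\neq\square$, and $A\neq\square$ forces $A\neq\varepsilon$, so the ${_1F_0}$-term still dies.
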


As mentioned earlier, there are other finite field analogues of the classical series, most notably those defined by Greene \cite{G} and Katz \cite{Ka}.  
The function ${_{n+1}F_{n}}(\cdots)^{\star}$ is a normalization of Katz's function and is also closely related to a normalization of Greene's function (though significantly different for certain choices of parameters). 
We will discuss these relationships in more detail in Section 2 and also the motivation for the definition of ${_{n+1}F_{n}}(\cdots)^{\star}$.

However we note at this stage that Greene's function has featured in results in many areas and that all these results can be reformulated in terms of ${_{n+1}F_{n}}(\cdots)^{\star}$: character sum evaluations \cite{GS, EL}; 
finite field versions of the Lagrange inversion formula \cite{G2}; the representation theory of SL($2, \mathbb{F}_q$) \cite{G3}; 
evaluating the number of points over $\mathbb{F}_{p}$ of certain algebraic varieties \cite{AO, F2, DMC2}; 
proofs of supercongruences \cite{A, AO, K, MO, M1, M2, M}; traces of Hecke operators \cite{FOP, F2}; formulas for Ramanujan's $\tau$-function \cite{F2, P}; and, relationships with Fourier coefficients of certain other modular forms \cite{A, AO, E, DMC2, M}. 

In particular, all of the results cited above for relationships with Fourier coefficients of modular forms can be restated in terms of ${_{n+1}F_{n}}(\cdots)^{\star}$. For example, let $\gamma(n)$ be given by
$
\eta^{4}(2z) \eta^{4}(4z)=\sum_{n=1}^{\infty} \gamma(n)q^n \in S_4(\Gamma_0(8))
$
where $q:=e^{2 \pi i z}$ and
$\eta(z):=q^{\frac{1}{24}} \prod_{n=1}^{\infty}(1-q^n)
$ is Dedekind's eta function. Let $\phi \in \widehat{\mathbb{F}_p^{*}}$ be the character of order $2$. Then one of the main results in \cite{AO} can be re-written in terms of a well-poised ${_{4}F_{3}}(\cdots)^{\star}$.  
\begin{theorem}[Ahlgren and Ono \cite{AO}] \label{thm_AO}
If $p$ is an odd prime, then
\begin{equation*}
{_{4}F_{3}} {\biggl( \begin{array}{cccc} \phi, & \phi, & \phi, & \phi \\
\phantom{\phi} & \varepsilon, & \varepsilon, & \varepsilon \end{array}
\Big| \; 1 \biggr)}_{p}^{\star}
=\gamma(p) + p.
\end{equation*}
\end{theorem}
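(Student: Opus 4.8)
The plan is to obtain this identity not from scratch but as a translation of the original theorem of Ahlgren and Ono in \cite{AO}, whose proof supplies the (deep) modularity input relating $\gamma(p)$ to a point count; their result is phrased in terms of Greene's hypergeometric function, so the entire task reduces to matching the well-poised $\star$-function at the specific parameters $A_i=\phi$ (top) and $B_j=\varepsilon$ (bottom) with the Greene object that Ahlgren and Ono evaluate. The natural vehicle for this is the explicit dictionary between ${_{n+1}F_{n}}(\cdots)^{\star}$ and Greene's function recorded in Proposition \ref{prop_FtoGreene}. Because that dictionary is clean only for generic parameters, and the present parameters are maximally degenerate, I expect to run the comparison directly from Definition \ref{def_HypFnFF} rather than quoting Proposition \ref{prop_FtoGreene} verbatim.

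First I would expand ${_{4}F_{3}}(\phi,\phi,\phi,\phi;\varepsilon,\varepsilon,\varepsilon\mid 1)^{\star}$ from Definition \ref{def_HypFnFF}. With all four top entries equal to $\phi$ and all three bottom entries trivial, the sign $\chi(-1)^{4}$ is $1$ and the summand collapses to $g(\phi\chi)^{4}\,g(\bar{\chi})^{4}/\bigl(g(\phi)^{4}\,g(\varepsilon)^{3}\bigr)$, using $\overline{\varepsilon\chi}=\bar{\chi}$ and $\bar\varepsilon=\varepsilon$. I would then substitute $g(\varepsilon)=-1$ and $g(\phi)^{4}=q^{2}$ (from $g(\phi)^{2}=\phi(-1)q$), and rewrite the character sum of $g(\phi\chi)^{4}\,g(\bar{\chi})^{4}$ in Greene's normalization by converting each Gauss-sum ratio into a binomial via $g(A)g(\bar A)=A(-1)q$ together with the defining relation for Greene's $\binom{A}{B}$; this is exactly the computation underlying Proposition \ref{prop_FtoGreene}.

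The delicate point, and the main obstacle, is the degeneracy of the parameters: the Gauss-sum and Greene-binomial formulations agree term by term only when the relevant characters are nontrivial, but here $g(\phi\chi)$ becomes $g(\varepsilon)=-1$ at $\chi=\phi$ and $g(\bar{\chi})$ becomes $g(\varepsilon)=-1$ at $\chi=\varepsilon$. Thus the two normalizations genuinely differ at precisely the characters $\chi=\varepsilon$ and $\chi=\phi$, and it is this discrepancy that will produce the additive correction. I would isolate these two terms, evaluate them explicitly using $g(\varepsilon)=-1$ and $g(\phi)^{2}=\phi(-1)q$, and show that the remaining generic sum is $q^{a}$ times Ahlgren and Ono's Greene-normalized ${_{4}F_{3}}$ for the appropriate integer $a$ dictated by the prefactors.

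Finally I would substitute the Ahlgren--Ono evaluation of that Greene ${_{4}F_{3}}$ in terms of $\gamma(p)$, combine the prefactors with the two explicitly computed singular contributions, and verify that all powers of $q=p$ cancel to leave $\gamma(p)+p$. The only genuine bookkeeping lies in the degenerate-character accounting of the previous paragraph; granting that, together with the Ahlgren--Ono modular input, the conclusion is immediate.
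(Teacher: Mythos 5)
Your overall route --- importing the Ahlgren--Ono evaluation of Greene's ${_{4}F_{3}}$ and translating it into the $\star$-normalization --- is indeed the paper's route, but the central mechanism of your plan is mistaken. The parameters $A_0=A_1=A_2=A_3=\phi$, $B_1=B_2=B_3=\varepsilon$ are \emph{not} degenerate for Proposition \ref{prop_FtoGreene}: its hypotheses are $A_0\neq\varepsilon$ and $A_i\neq B_i$, and here $A_0=\phi\neq\varepsilon$ and $A_i=\phi\neq\varepsilon=B_i$, so the clean multiplicative dictionary applies verbatim. The conversion factor is $\prod_{i=1}^{3}\binom{\phi}{\varepsilon}^{-1}$, and since $\binom{\phi}{\varepsilon}=\frac{1}{p}\sum_{x\neq 0,1}\phi(x)=-\frac{1}{p}$, it equals $(-p)^3=-p^3$. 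Thus
\begin{equation*}
{_{4}F_{3}} {\biggl( \begin{array}{cccc} \phi, & \phi, & \phi, & \phi \\
\phantom{\phi} & \varepsilon, & \varepsilon, & \varepsilon \end{array}
\Big| \; 1 \biggr)}_{p}^{\star}
= -p^3 \,
{_{4}F_{3}} {\biggl( \begin{array}{cccc} \phi, & \phi, & \phi, & \phi \\
\phantom{\phi} & \varepsilon, & \varepsilon, & \varepsilon \end{array}
\Big| \; 1 \biggr)}^{G}_{p},
\end{equation*}
with no additive term, and the theorem follows at once from Ahlgren--Ono's result, which already contains the $+p$; this is precisely the ``additional factor of $-p^3$'' noted in the paper after Theorem \ref{thm_AO}.

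The specific claim that would derail you is that ``the two normalizations genuinely differ at precisely the characters $\chi=\varepsilon$ and $\chi=\phi$.'' This is false. Greene's binomial is a Jacobi sum, $\binom{A\chi}{B\chi}=\frac{B\chi(-1)}{q}J(A\chi,\overline{B\chi})$, and the Jacobi-to-Gauss conversion (\ref{for_JactoGauss}) degenerates only when the \emph{product} of the two arguments, namely $A\bar{B}$, is trivial --- a condition independent of $\chi$, and excluded here since $\phi\neq\varepsilon$. In particular $J(\varepsilon,\psi)=-1=g(\varepsilon)g(\psi)/g(\psi)$ for $\psi\neq\varepsilon$, so even at $\chi=\phi$ (where $g(\phi\chi)=g(\varepsilon)=-1$) and at $\chi=\varepsilon$ the two summands agree term by term; a direct check shows both normalized summands equal $-1$ at these two characters. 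Consequently there are no ``singular contributions'' to isolate, and if you engineered an additive correction in order to manufacture the $+p$, you would double count, since Ahlgren--Ono's evaluation is ${_{4}F_{3}}(\cdots)^{G}_{p}=-(\gamma(p)+p)/p^3$ and the $+p$ arrives entirely from that modular input, not from normalization bookkeeping. Carried out honestly, your term-by-term computation would collapse back to the purely multiplicative relation above --- so the fix is simply to quote Proposition \ref{prop_FtoGreene} directly rather than to work around it.
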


\noindent The corresponding version of Theorem \ref{thm_AO} using Greene's function features an additional factor of $-p^3$ on the left hand side. 
Factors of this type are common in results involving Greene's function and one advantage of using ${_{n+1}F_{n}}(\cdots)^{\star}$ is that these factors are not required, leading to cleaner results. 
Other advantages of using this new definition is that it leads to less restrictions on the parameters in our transformations; and the parameters in any given function can be permuted without changing the value of the function, which is a key feature of the classical series but not of Greene's function.

The rest of this paper is organized as follows. In Section \ref{sec_Rel} we will discuss the motivation for Definition \ref{def_HypFnFF} and its relationships with other functions in this area.
Section \ref{sec_Prelim} introduces some preliminary details on Gauss and Jacobi sums which we will use in proving our results in Section \ref{sec_Proofs}.
We then make some concluding remarks in Section \ref{sec_cr}.


\section{Relationships with other Hypergeometric Functions over Finite Fields.}\label{sec_Rel}
In this section we outline the relationship between the hypergeometric function over finite fields defined in Definition \ref{def_HypFnFF} and those defined by Greene \cite{G} and Katz \cite{Ka}.

We start by defining the function of Greene. For $A$, $B  \in \widehat{\mathbb{F}^{*}_{q}}$, define
$$\bin{A}{B}:= \frac{B(-1)}{q} \sum_{x \in \mathbb{F}_{q}} A(x) \bar{B}(1-x).$$ 
Then for $A_0,A_1,\dotsc, A_n$, $B_1, \dotsc, B_n  \in \widehat{\mathbb{F}^{*}_{q}}$ and $x \in \mathbb{F}_{q}$, define
\begin{equation*}
{_{n+1}F_n} {\left( \begin{array}{cccc} A_0, & A_1, & \dotsc, & A_n \\
\phantom{A_0} & B_1, & \dotsc, & B_n \end{array}
\Big| \; x \right)}^{G}_{q}
:= \frac{q}{q-1} \sum_{\chi  \in \widehat{\mathbb{F}^{*}_{q}}} \binom{A_0 \chi}{\chi} \prod_{i=1}^{n} \binom{A_i \chi}{B_i \chi} \chi(x) .
\end{equation*}

\noindent Greene found many transformation and summation formulas analogous to those in the classical case. For example, the following is an analogue of Gauss' Theorem \ref{thm_Gauss}. 
We quote this result with an appropriate normalization factor which Greene noted would be required to state the result in a comparable form to the classical case.  
\begin{theorem}[Greene \cite{G}]\label{thm_G_Gauss}
If $A \neq \varepsilon$, $B \neq C$ and $AB \neq C$ then
\begin{equation*}
\binom{B}{C}^{-1}
{_{2}F_1} \biggl( \begin{array}{cc} A, & B \vspace{.02in}\\
\phantom{A} & C \end{array}
\Big| \; 1 \biggr)^G_q
= \frac{g(A\bar{C}) \, g(B\bar{C})}{ g(\bar{C}) \, g(AB \bar{C})}.
\end{equation*}
\end{theorem}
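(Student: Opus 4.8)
The plan is to reduce Greene's function to the normalized function ${_{2}F_1}(\cdots)_q^{\star}$ of Definition \ref{def_HypFnFF} and then invoke the Helversen--Pasotto evaluation of Theorem \ref{thm_1a}, whose right-hand side is already exactly the asserted quotient of Gauss sums. Concretely, under the stated hypotheses I would first establish the identity
\begin{equation*}
\binom{B}{C}^{-1} \, {_{2}F_1} \biggl( \begin{array}{cc} A, & B \\ \phantom{A} & C \end{array} \Big| \; 1 \biggr)^G_q = {_{2}F_1} \biggl( \begin{array}{cc} A, & B \\ \phantom{A} & C \end{array} \Big| \; 1 \biggr)_q^{\star},
\end{equation*}
after which Theorem \ref{thm_1a} (which is valid precisely because $AB \neq C$) closes the argument. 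This is just the order-one case of the general passage between the two functions discussed in Section \ref{sec_Rel}.

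To prove the displayed identity I would rewrite each binomial in Greene's defining sum in terms of Gauss sums. Setting $J(X,Y):=\sum_{x} X(x) Y(1-x)$, so that $\binom{X}{Y}=\frac{Y(-1)}{q}J(X,\bar{X})$ with $Y$ in place of the second slot, the Jacobi--Gauss relation gives $J(\lambda,\mu)=g(\lambda)g(\mu)/g(\lambda\mu)$ whenever $\lambda\mu\neq\varepsilon$ (using the convention $g(\varepsilon)=-1$). Applying this to the two binomials occurring in the sum produces
\begin{equation*}
\binom{A\chi}{\chi} = \frac{\chi(-1)}{q} \, \frac{g(A\chi) \, g(\bar{\chi})}{g(A)}, \qquad \binom{B\chi}{C\chi} = \frac{C(-1)\,\chi(-1)}{q} \, \frac{g(B\chi) \, g(\bar{C}\bar{\chi})}{g(B\bar{C})}.
\end{equation*}
The subtle point, and the step I expect to be the main obstacle, is that these conversions could a priori fail at the finitely many $\chi$ for which an intermediate character becomes trivial. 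Here the relevant products are $A\chi\cdot\bar\chi=A$ and $B\chi\cdot\bar{C}\bar\chi=B\bar{C}$, and the Jacobi--Gauss relation genuinely breaks down \emph{only} when such a product is trivial; the hypotheses $A\neq\varepsilon$ and $B\neq C$ are exactly what rule this out. I would verify term by term (including the cases $\chi\in\{\varepsilon,\bar A,\bar B,\bar C\}$, where one factor is $g(\varepsilon)=-1$) that no boundary corrections arise, so that both formulas hold for \emph{every} $\chi$.

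Substituting these into $\frac{q}{q-1}\sum_\chi \binom{A\chi}{\chi}\binom{B\chi}{C\chi}$, the factor $\chi(-1)^2=1$ disappears and the constants pull out, leaving $\frac{C(-1)}{q(q-1)\,g(A)\,g(B\bar{C})}\,S$, where $S:=\sum_\chi g(A\chi)g(B\chi)g(\bar{C}\bar\chi)g(\bar\chi)$. By Definition \ref{def_HypFnFF} this same sum satisfies ${_{2}F_1}(A,B;C\mid 1)_q^{\star}=\frac{1}{(q-1)\,g(A)\,g(B)\,g(\bar{C})}\,S$. Finally, writing $\binom{B}{C}=\frac{C(-1)}{q}\,g(B)\,g(\bar{C})/g(B\bar{C})$ (legitimate since $B\neq C$) and multiplying through, all of $C(-1)$, $q$, $g(B)$, $g(\bar{C})$, and $g(B\bar{C})$ cancel, collapsing the normalization to $1$ and yielding the displayed identity. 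Applying Theorem \ref{thm_1a} then gives $g(A\bar{C})\,g(B\bar{C})/\bigl(g(\bar{C})\,g(AB\bar{C})\bigr)$, as required.
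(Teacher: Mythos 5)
Your proposal is correct, but note that the paper never proves this statement itself --- it is quoted from Greene \cite{G} with the normalizing factor $\binom{B}{C}^{-1}$ attached --- so there is no internal proof to match; what you have done is supply the derivation that the paper's Section \ref{sec_Rel} machinery implicitly prescribes. Your displayed identity is exactly the $n=1$ case of Proposition \ref{prop_FtoGreene} (whose hypotheses $A_0 \neq \varepsilon$ and $A_1 \neq B_1$ are your $A \neq \varepsilon$ and $B \neq C$, and which the paper states without proof), and the remaining hypothesis $AB \neq C$ is precisely what Theorem \ref{thm_1a} --- equivalently Corollary \ref{cor_HP}, i.e.\ Helversen--Pasotto with $D = \varepsilon$ --- requires. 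Your handling of the boundary cases is sound: with the convention $\varepsilon(0)=0$ one has $g(\varepsilon) = -1$, and the first case of (\ref{for_JactoGauss}) holds uniformly even when exactly one of the two characters is trivial, so the conversions $\binom{A\chi}{\chi} = \frac{\chi(-1)}{q} g(A\chi) g(\bar{\chi})/g(A)$ and $\binom{B\chi}{C\chi} = \frac{C\chi(-1)}{q} g(B\chi) g(\bar{C}\bar{\chi})/g(B\bar{C})$ are valid for \emph{every} $\chi$ with no correction terms, exactly under $A \neq \varepsilon$ and $B \neq C$; the final cancellation against $\binom{B}{C} = \frac{C(-1)}{q} g(B) g(\bar{C})/g(B\bar{C})$ then collapses the normalization as you claim. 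This route differs from Greene's original argument in \cite{G}, which works inside his binomial-coefficient calculus via Jacobi-sum manipulations and his finite-field binomial theorem; your approach instead funnels everything through the single four-Gauss-sum evaluation of Theorem \ref{thm_HP}, which buys uniformity and makes transparent why each of the three hypotheses is needed, at the modest cost of the trivial-character case checks, which you carry out. One cosmetic slip: your parenthetical $\binom{X}{Y} = \frac{Y(-1)}{q} J(X,\bar{X})$ should read $J(X,\bar{Y})$; the displayed formulas that follow use the correct form, so this does not affect the argument.
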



\noindent We now recall a character sum evaluation of Helversen-Pasotto. 
\begin{theorem}[Helversen-Pasotto \cite{Hp1}]\label{thm_HP}
For $A$, $B$, $C$, $D \in \widehat{\mathbb{F}_q^{*}}$,
\begin{multline*} 
\frac{1}{q-1} \sum_{\chi \in \widehat{\mathbb{F}_q^{*}}} g(A \chi) g(B \chi) g(C \bar{\chi}) g(D \bar{\chi})\\
 = \frac{g(AC) g(AD) g(BC) g(BD)}{g(ABCD)} +q(q-1) AB(-1) \delta(ABCD).
\end{multline*}
\end{theorem}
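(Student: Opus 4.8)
The plan is to recognise this as the finite field analogue of Barnes' first lemma
\[
\frac{1}{2\pi i}\int_{-i\infty}^{i\infty}\gf{a+s}\gf{b+s}\gf{c-s}\gf{d-s}\,ds=\frac{\gf{a+c}\gf{a+d}\gf{b+c}\gf{b+d}}{\gf{a+b+c+d}},
\]
and to mirror the standard Beta-integral proof of that identity: the Gauss sum $g$ plays the role of $\gf{\cdot}$, the normalised sum $\frac{1}{q-1}\sum_{\chi}$ the role of the contour integral, and character orthogonality the role of the Beta evaluation. Throughout I write $J(\rho,\sigma):=\sum_{x\in\mathbb{F}_q}\rho(x)\sigma(1-x)$ and use the familiar relations $g(\rho)g(\sigma)=J(\rho,\sigma)g(\rho\sigma)$ when $\rho\sigma\neq\varepsilon$, $g(\rho)g(\bar{\rho})=\rho(-1)q$ for $\rho\neq\varepsilon$, and $g(\varepsilon)=-1$.

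First I would pair the Gauss sums so that $\chi$ disappears from each product. Since $(A\chi)(C\bar{\chi})=AC$ and $(B\chi)(D\bar{\chi})=BD$, assuming for now that $AC\neq\varepsilon$ and $BD\neq\varepsilon$, I would write $g(A\chi)g(C\bar{\chi})=g(AC)\,J(A\chi,C\bar{\chi})$ and $g(B\chi)g(D\bar{\chi})=g(BD)\,J(B\chi,D\bar{\chi})$, both valid for every $\chi$. The left-hand side then becomes $\frac{g(AC)g(BD)}{q-1}\sum_{\chi}J(A\chi,C\bar{\chi})\,J(B\chi,D\bar{\chi})$.

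Next I would expand the two Jacobi sums and interchange the order of summation. For $x\neq 0,1$ one has $A\chi(x)C\bar{\chi}(1-x)=A(x)C(1-x)\,\chi\!\left(\frac{x}{1-x}\right)$, the excluded terms vanishing, so after interchange the inner character sum is $\sum_{\chi}\chi\!\left(\frac{x}{1-x}\cdot\frac{y}{1-y}\right)$. By orthogonality this equals $q-1$ exactly when $\frac{x}{1-x}\cdot\frac{y}{1-y}=1$, that is when $x+y=1$, and vanishes otherwise. Setting $y=1-x$ collapses the double sum to $(q-1)\sum_{x}AD(x)\,BC(1-x)=(q-1)J(AD,BC)$, so
\[
\frac{1}{q-1}\sum_{\chi}g(A\chi)g(B\chi)g(C\bar{\chi})g(D\bar{\chi})=g(AC)\,g(BD)\,J(AD,BC).
\]
When $ABCD\neq\varepsilon$ I would rewrite $J(AD,BC)=g(AD)g(BC)/g(ABCD)$ to recover exactly the main term, with $\delta(ABCD)=0$ throughout this range.

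The hard part will be the degeneracies, which are precisely where the correction term $q(q-1)AB(-1)\delta(ABCD)$ is born. When $ABCD=\varepsilon$ the identity $J(AD,BC)=g(AD)g(BC)/g(ABCD)$ is no longer legitimate; instead $BC=\bar{AD}$, so I would use $J(\rho,\bar{\rho})=-\rho(-1)$ for $\rho\neq\varepsilon$ (or $J(\varepsilon,\varepsilon)=q-2$ when $AD=\varepsilon$), while the prefactor simultaneously collapses via $g(AC)g(BD)=AC(-1)q$. A short reconciliation then shows $g(AC)g(BD)J(AD,BC)$ agrees with the stated right-hand side: the two $q^2$-sized contributions cancel and leave the single $q(q-1)AB(-1)$ term, using $AB(-1)=CD(-1)$ when $ABCD=\varepsilon$. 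Separately, the cases $AC=\varepsilon$ and $BD=\varepsilon$ were excluded at the pairing step and must be treated directly from the definition, splitting off the single character $\chi$ for which $A\chi$ (respectively $B\chi$) is trivial; I expect these to reduce to character-sum identities of the same shape. Keeping track of this finite list of sub-cases so that each lands on the correct side of the $\delta$-term is the only genuine obstacle; the generic computation above is routine.
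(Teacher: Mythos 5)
The paper offers no proof of Theorem \ref{thm_HP}: it is quoted as a known character-sum evaluation of Helversen-Pasotto \cite{Hp1}, whose very title (``L'identit\'e de Barnes pour les corps finis'') confirms your identification of the statement as the finite-field Barnes lemma, so there is no in-paper argument to compare against and your proposal must be judged on its own merits. It is correct, and it is essentially the standard proof: pairing the four Gauss sums into two Jacobi sums and collapsing the character sum by orthogonality along $x+y=1$. The generic computation is sound --- for $AC, BD\neq\varepsilon$ the pairing identities hold for \emph{every} $\chi$ under the paper's conventions (since $\varepsilon(0)=0$ gives $J(\varepsilon,\sigma)=-1=g(\varepsilon)$, the relation survives even when $A\chi$ or $C\bar{\chi}$ is trivial) --- and your reconciliation at $ABCD=\varepsilon$ checks out: the left side is $-g(AC)g(BD)AD(-1)=-q\,AB(-1)$, the right side is $-q^{2}AB(-1)+q(q-1)AB(-1)=-q\,AB(-1)$, and both sides equal $AB(-1)\,q(q-2)$ in the subcase $AD=BC=\varepsilon$. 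The only part you leave as a sketch, the cases $AC=\varepsilon$ or $BD=\varepsilon$, does go through as you predict, and in fact lands on machinery the paper proves independently: taking $C=\bar{A}$ and splitting off the character $\chi=\bar{A}$ (where $A\chi$ and $C\bar{\chi}$ are \emph{simultaneously} trivial --- a small correction to your phrasing) gives
\begin{equation*}
\frac{1}{q-1}\sum_{\chi\in\widehat{\mathbb{F}_q^{*}}} g(A\chi)\,g(B\chi)\,g(\bar{A}\bar{\chi})\,g(D\bar{\chi})
=\frac{q\,A(-1)}{q-1}\sum_{\chi\in\widehat{\mathbb{F}_q^{*}}} g(B\chi)\,g(D\bar{\chi})\,\chi(-1)\;-\;g(\bar{A}B)\,g(AD),
\end{equation*}
and Proposition \ref{thm_2} (proved from Proposition \ref{prop_sumJac} alone, so there is no circularity) shows the first term vanishes unless $BD=\varepsilon$, in which case it equals $q(q-1)AB(-1)$; this matches the stated right-hand side, whose main term degenerates uniformly to $-g(\bar{A}B)\,g(AD)$ when $AC=\varepsilon$. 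The case $BD=\varepsilon$ then follows from the symmetry $A\leftrightarrow B$, $C\leftrightarrow D$ of both sides. In short: a correct proof whose one sketched family of cases is routine to complete, and which uses exactly the toolkit (Jacobi--Gauss relations and Proposition \ref{thm_2}) that the paper itself assembles in Section \ref{sec_Prelim}.
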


\noindent Choosing $D$ to be the trivial character, replacing $C$ by $\bar{C}$ and introducing an appropriate factor we get the following corollary.
\begin{cor}\label{cor_HP}
For $A$, $B$, $C \in \widehat{\mathbb{F}_q^{*}}$ such that $AB \neq C$,
\begin{equation*}
\frac{1}{q-1} \sum_{\chi \in \widehat{\mathbb{F}_q^{*}}} \frac{g(A \chi)}{g(A)} \frac{g(B \chi)}{g(B)} \frac{g(\bar{C \chi})}{g(\bar{C})} g( \bar{\chi})
= \frac{g(A\bar{C}) \, g(B\bar{C})}{ g(\bar{C}) \, g(AB \bar{C})}.
\end{equation*}
\end{cor}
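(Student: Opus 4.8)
The plan is to obtain this corollary as a direct specialization of the Helversen--Pasotto evaluation in Theorem \ref{thm_HP}, following the recipe flagged in the text: set $D = \varepsilon$, replace $C$ by $\bar{C}$, and then divide through by a suitable product of Gauss sums. No independent character-sum manipulation should be needed, since Theorem \ref{thm_HP} already supplies the full evaluation of the relevant sum for general parameters.

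First I would put $D = \varepsilon$ in Theorem \ref{thm_HP}. On the left-hand side this uses $\varepsilon\bar{\chi} = \bar{\chi}$, so that the factor $g(D\bar{\chi})$ collapses to $g(\bar{\chi})$, leaving $\frac{1}{q-1}\sum_{\chi} g(A\chi)\,g(B\chi)\,g(C\bar{\chi})\,g(\bar{\chi})$. On the right-hand side the same substitution turns $g(AD),\,g(BD)$ into $g(A),\,g(B)$ and $ABCD$ into $ABC$, so the identity becomes
\begin{equation*}
\frac{1}{q-1}\sum_{\chi} g(A\chi)\,g(B\chi)\,g(C\bar{\chi})\,g(\bar{\chi}) = \frac{g(AC)\,g(A)\,g(BC)\,g(B)}{g(ABC)} + q(q-1)\,AB(-1)\,\delta(ABC).
\end{equation*}
Note that no bare $g(\varepsilon)$ appears, since $D$ only enters through products of the form $g(AD)$, $g(BD)$, $g(D\bar\chi)$ and through $ABCD$.

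Next I would replace $C$ by $\bar{C}$ throughout. Because $\bar{C}\bar{\chi} = \overline{C\chi}$, the summand on the left becomes $g(A\chi)\,g(B\chi)\,g(\overline{C\chi})\,g(\bar{\chi})$, matching the shape required in the statement, while on the right $AC,\,BC,\,ABC$ become $A\bar{C},\,B\bar{C},\,AB\bar{C}$. The degenerate contribution now carries the factor $\delta(AB\bar{C})$, and here the hypothesis $AB \neq C$ does exactly the work needed: it is precisely the assertion that $AB\bar{C} \neq \varepsilon$, so $\delta(AB\bar{C}) = 0$ and the second term drops out. Finally, dividing both sides by $g(A)\,g(B)\,g(\bar{C})$ --- the ``appropriate factor'' --- reproduces the normalized summand on the left and simplifies the right-hand side to $\dfrac{g(A\bar{C})\,g(B\bar{C})}{g(\bar{C})\,g(AB\bar{C})}$, which is the claimed identity.

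Since the argument is a pure specialization, I do not expect any genuine analytic obstacle. The only points requiring care are bookkeeping ones: tracking that the single substitution $C \mapsto \bar{C}$ is applied consistently on both sides of Theorem \ref{thm_HP}, and checking that the stated hypothesis $AB \neq C$ is the correct translation of the condition $AB\bar{C} \neq \varepsilon$ that annihilates the $\delta$-term. Verifying this compatibility is what makes the ``otherwise'' case of the Helversen--Pasotto formula disappear and yields the clean closed form.
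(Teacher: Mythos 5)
Your proposal is correct and is exactly the paper's argument: the paper derives Corollary \ref{cor_HP} from Theorem \ref{thm_HP} by setting $D=\varepsilon$, replacing $C$ by $\bar{C}$, and dividing by $g(A)\,g(B)\,g(\bar{C})$, with the hypothesis $AB \neq C$ annihilating the $\delta(AB\bar{C})$ term just as you observe. No discrepancies to report.
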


\noindent This can be also be viewed as an analogue of Gauss' Theorem \ref{thm_Gauss} but has fewer restrictions on the parameters than Theorem \ref{thm_G_Gauss}. 
We have therefore framed Definition \ref{def_HypFnFF} as a generalization of the left hand side of Corollary \ref{cor_HP} and, in general, transformations involving this function require fewer restrictions on the parameters than corresponding results expressed using Greene's function.
For example, we can restate Theorem 4.37 of \cite{G} in a comparable form to Theorem \ref{cor_4} but we require more conditions on the parameters.
\begin{theorem}[Greene, \cite {G} Thm 4.37]
For $A$, $B$, $C \in \widehat{\mathbb{F}_q^{*}}$ such that $A \neq \varepsilon$, $B \neq \varepsilon$, $C \neq \varepsilon$, $BC \neq A$ and $(BC)^2 \neq A$,
\begin{multline*}
\binom{B}{A\bar{C}}^{-1} \binom{A}{A\bar{B}}^{-1} {_{3}F_2} \biggl( \begin{array}{ccc} C, & B, & A \vspace{.02in}\\
\phantom{A} & A\bar{C}, & A\bar{B} \end{array}
\Big| \; 1 \biggr)^{G}_{q}\\[8pt]
=
\begin{cases}
0 & \textup{if $A \neq \square$,}\\[4pt]
\displaystyle\sum_{R^2=A} \dfrac{ g(\bar{A}) \, g(\bar{R}B) \, g(\bar{R}C) \, g(\bar{A}BC)}{g(\bar{R}) \, g(\bar{A}B) \, g(\bar{A}C) \, g(\bar{R}BC)} 
& \textup{otherwise.}
\end{cases}
\end{multline*}
\end{theorem}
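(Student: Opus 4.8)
The plan is to deduce this restatement from the already-established Dixon analogue, Theorem~\ref{cor_4}, by converting the normalized Greene function on the left into the ${}^{\star}$ function of Definition~\ref{def_HypFnFF}. The single computational tool I would use is the reduction of Greene's binomial coefficient to Gauss sums: combining its defining Jacobi-type sum with the Gauss--Jacobi relation gives
\[
\binom{A}{B} = \frac{B(-1)}{q}\,\frac{g(A)\,g(\bar{B})}{g(A\bar{B})},
\]
and, crucially, this identity is \emph{exact precisely when the upper and lower characters differ}, i.e.\ when $A\neq B$; it holds even when $A$ or $B$ is trivial (using $g(\varepsilon)=-1$), and fails only in the excluded case $A=B$, where the two sides differ by a factor of~$q$.

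First I would substitute this reduction for the outer factor $\binom{C\chi}{\chi}$ and for each inner factor $\binom{B\chi}{A\bar{C}\chi}$, $\binom{A\chi}{A\bar{B}\chi}$ in the defining sum of ${}_{3}F_{2}(\cdots)^{G}_{q}$, and likewise for the two prefactor binomials. Collecting the powers of $q$, the characters $\chi(-1)$ and $\prod_i B_i(-1)$, and the Gauss sums, the prefactor $\binom{B}{A\bar{C}}^{-1}\binom{A}{A\bar{B}}^{-1}$ is seen to be exactly the factor that converts Greene's normalization into that of the ${}^{\star}$ function, so that the character sum which remains is identical to the one in Definition~\ref{def_HypFnFF}. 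This yields
\[
\binom{B}{A\bar{C}}^{-1}\binom{A}{A\bar{B}}^{-1}\,{}_{3}F_{2}\biggl(\begin{array}{ccc} C, & B, & A \\ & A\bar{C}, & A\bar{B}\end{array}\Big|\;1\biggr)^{G}_{q} = {}_{3}F_{2}\biggl(\begin{array}{ccc} C, & B, & A \\ & A\bar{C}, & A\bar{B}\end{array}\Big|\;1\biggr)^{\star}_{q}.
\]
Since Definition~\ref{def_HypFnFF} treats the numerator characters symmetrically and the denominator characters symmetrically, the right-hand side is unchanged if I reorder the top line to $A,B,C$ and the bottom line to $A\bar{B},A\bar{C}$; the result is precisely the left-hand side of Theorem~\ref{cor_4}. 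Applying that theorem then produces the stated dichotomy together with the sum over $R^{2}=A$.

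The delicate point, and the reason for the extra hypotheses, is that the reduction above must hold \emph{for every $\chi$ in the sum} with no correction terms, which requires the upper and lower character of each binomial never to coincide. I expect the three conditions beyond those of Theorem~\ref{cor_4} to control exactly these coincidences: $C\neq\varepsilon$ keeps $C\chi\neq\chi$ in the outer factor; $BC\neq A$ keeps $B\chi\neq A\bar{C}\chi$ (and makes the prefactor $\binom{B}{A\bar{C}}$ reducible); and $B\neq\varepsilon$ keeps $A\chi\neq A\bar{B}\chi$ (and handles $\binom{A}{A\bar{B}}$). The remaining hypotheses $A\neq\varepsilon$ and $(BC)^{2}\neq A$ are inherited verbatim from Theorem~\ref{cor_4}. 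Verifying that these five conditions jointly suppress every degenerate term in the passage from ${}^{G}_{q}$ to ${}^{\star}_{q}$, so that the intermediate identity holds on the nose, is the only real work; it is careful Gauss-sum accounting rather than a new idea, and I would note that the specific ordering of Greene's parameters matters here because, unlike ${}^{\star}$, Greene's function is not permutation invariant.
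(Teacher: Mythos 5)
Your proposal is correct and takes essentially the paper's own route: your binomial-to-Gauss-sum conversion with the non-coincidence conditions ($C \neq \varepsilon$ for $\binom{C\chi}{\chi}$, $BC \neq A$ for $\binom{B\chi}{A\bar{C}\chi}$, $B \neq \varepsilon$ for $\binom{A\chi}{A\bar{B}\chi}$) is exactly Proposition \ref{prop_FtoGreene} specialized to these parameters, and combining it with the permutation symmetry of ${_{n+1}F_{n}}(\cdots)^{\star}$ and Theorem \ref{cor_4} (whose hypotheses $A \neq \varepsilon$ and $(BC)^2 \neq A$ supply the remaining conditions) is precisely the link the paper asserts between this restatement of Greene's Theorem 4.37 and the Gauss-sum evaluation. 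Your derivation is also non-circular, since the paper proves Theorem \ref{cor_4} independently of Greene via the Helversen-Pasotto identity.
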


\noindent Many of the transformations we develop in this paper are based on summation properties of products of  Gauss sums. 
Greene's function is essentially defined using Jacobi sums and often it is necessary to impose conditions on the parameters to relate the Jacobi sums to the required product of Gauss sums. 
Defining ${_{n+1}F_{n}}(\cdots)^{\star}$ purely in terms of Gauss sums strips out the need for these conditions.
The following proposition relates the two functions when certain conditions on the parameters are satisfied.
\begin{prop}\label{prop_FtoGreene}
If $A_0 \neq \varepsilon$ and $A_i \neq B_i$ for each $1 \leq i \leq n$ then 
\begin{equation*}
{_{n+1}F_{n}} {\biggl( \begin{array}{cccc} A_0, & A_1, & \dotsc, & A_n \\
 \phantom{B_0,} & B_1, & \dotsc, & B_n \end{array}
\Big| \; x \biggr)}_{q}^{\star}
=
\left[\prod_{i=1}^{n} \binom{A_i}{B_i}^{-1}\right]
{_{n+1}F_{n}} {\biggl( \begin{array}{cccc} A_0, & A_1, & \dotsc, & A_n \\
 \phantom{B_0,} & B_1, & \dotsc, & B_n \end{array}
\Big| \; x \biggr)}^{G}_{q}.
\end{equation*}
\end{prop}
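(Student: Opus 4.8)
The plan is to rewrite each of Greene's binomial coefficients occurring in the definition of ${_{n+1}F_{n}}(\cdots)^{G}_q$ as a ratio of Gauss sums, and then to match the resulting expression against Definition \ref{def_HypFnFF} term-by-term in the summation variable $\chi$. The key tool is the identity
$$\bin{A}{B} = \frac{B(-1)}{q} \, \frac{g(A) \, g(\overline{B})}{g(A\overline{B})},$$
which holds whenever $A \neq B$ (so that $A\overline{B} \neq \varepsilon$); it follows by writing $\bin{A}{B}$ in terms of the Jacobi sum $\sum_{x} A(x)\overline{B}(1-x)$ and applying the standard Gauss--Jacobi relation recalled in Section \ref{sec_Prelim}. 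Since every Gauss sum is nonzero, all the quotients below are well defined.

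Applying this to the leading factor, with $A = A_0\chi$ and $B = \chi$, gives $\bin{A_0\chi}{\chi} = \frac{\chi(-1)}{q}\frac{g(A_0\chi)g(\overline{\chi})}{g(A_0)}$, which is valid for \emph{every} $\chi$ precisely because the hypothesis $A_0 \neq \varepsilon$ forces $(A_0\chi)\overline{\chi} = A_0 \neq \varepsilon$ independently of $\chi$. Likewise, for $1 \le i \le n$, with $A = A_i\chi$ and $B = B_i\chi$, one obtains $\bin{A_i\chi}{B_i\chi} = \frac{B_i(-1)\chi(-1)}{q}\frac{g(A_i\chi)g(\overline{B_i\chi})}{g(A_i\overline{B_i})}$, valid for every $\chi$ because $A_i \neq B_i$ forces $(A_i\chi)\overline{B_i\chi} = A_i\overline{B_i} \neq \varepsilon$. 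This is the crucial point: the two hypotheses of the proposition are exactly the conditions guaranteeing that the relevant product of characters is nontrivial \emph{uniformly in} $\chi$, so that the term-by-term conversion is legitimate for the entire sum at once.

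Substituting these into the summand of ${_{n+1}F_{n}}(\cdots)^{G}_q$ and collecting the factors that do not depend on $\chi$, I would pull out of the sum the quantity $\frac{\prod_{i=1}^n B_i(-1)}{q^{n+1} \, g(A_0) \prod_{i=1}^n g(A_i\overline{B_i})}$ together with the overall prefactor $\frac{q}{q-1}$; the powers of $\chi(-1)$ combine to $\chi(-1)^{n+1}$ and the surviving Gauss sums assemble into $\prod_{i=0}^{n} g(A_i\chi) \cdot g(\overline{\chi}) \cdot \prod_{j=1}^{n} g(\overline{B_j\chi})$. Comparing with Definition \ref{def_HypFnFF}, the character sum that remains is \emph{identical} to the one defining ${_{n+1}F_{n}}(\cdots)^{\star}_q$, up to the $\chi$-independent prefactor $\frac{1}{(q-1)\prod_{i=0}^n g(A_i)\prod_{j=1}^n g(\overline{B_j})}$ of the latter.

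It then only remains to compare the two prefactors. Dividing the $\star$-prefactor by the $G$-prefactor and cancelling the common factor $g(A_0)$ leaves $\prod_{i=1}^{n} \frac{q\, g(A_i\overline{B_i})}{B_i(-1)\, g(A_i) \, g(\overline{B_i})}$, and by the displayed binomial identity this is exactly $\prod_{i=1}^n \bin{A_i}{B_i}^{-1}$ (recall $B_i(-1) = \pm 1$, so its placement in numerator or denominator is immaterial), giving the claimed relation. The computation is otherwise routine; the only things to watch are the bookkeeping of the factors $q$, $\chi(-1)$ and $B_i(-1)$, and---more substantively---the observation in the second paragraph that the hypotheses $A_0 \neq \varepsilon$ and $A_i \neq B_i$ are what render $A_0$ and each $A_i\overline{B_i}$ nontrivial simultaneously for all $\chi$, which is precisely what makes the term-by-term Gauss-sum conversion valid.
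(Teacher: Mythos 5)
Your proof is correct: the paper states Proposition \ref{prop_FtoGreene} without proof, treating it as a routine consequence of the Gauss--Jacobi relation (\ref{for_JactoGauss}), and your term-by-term conversion of $\bin{A_0\chi}{\chi}$ and $\bin{A_i\chi}{B_i\chi}$ into quotients of Gauss sums---with the hypotheses $A_0 \neq \varepsilon$ and $A_i \neq B_i$ guaranteeing that the first branch of (\ref{for_JactoGauss}) applies uniformly in $\chi$---is exactly the intended argument, with the bookkeeping of the factors $q$, $\chi(-1)^{n+1}$ and $B_i(-1)$ carried out correctly. Your computation is moreover consistent with the paper's subsequent remark for the case $A_n = B_n \neq \varepsilon$, where the second branch of (\ref{for_JactoGauss}), triggered at the single character $\chi = \bar{A_n}$, produces precisely the extra term displayed there.
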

\noindent When these conditions are not satisfied, the relationship is not quite as straightforward. For example if $A_0 \neq \varepsilon$ and $A_i \neq B_i$ for each $1 \leq i \leq n-1$ but $A_n=B_n\neq \varepsilon$ , then
\begin{multline*}
{_{n+1}F_{n}} {\biggl( \begin{array}{ccccc} A_0, & A_1, & \dotsc, & A_{n-1}, & A_n \\
 \phantom{A_0,} & B_1, & \dotsc, & B_{n-1} , & A_n \end{array}
\Big| \; x \biggr)}_{q}^{\star}\\
=
\left[\prod_{i=1}^{n} \binom{A_i}{B_i}^{-1}\right]
{_{n+1}F_{n}} {\biggl( \begin{array}{ccccc} A_0, & A_1, & \dotsc, & A_{n-1}, & A_n \\
 \phantom{A_0,} & B_1, & \dotsc, & B_{n-1} , & A_n \end{array}
\Big| \; x \biggr)}^{G}_{q}\\
+(q-1) \binom{A_0 \bar{A_n}}{\bar{A_n}} \left[\prod_{i=1}^{n-1}\binom{A_i \bar{A_n}}{B_i \bar{A_n}} \binom{A_i}{B_i}^{-1}\right] \bar{A_n}(x).
\end{multline*}

We now consider the `hypergeometric sum' defined by Katz (see \cite{Ka} Ch 8.2).
For $t \in \mathbb{F}_q^{*}$ and, for $m,n \in \mathbb{Z}^+$, let 
\begin{equation*}
V(t,n,m) = \left\{x_i, y_j \in \mathbb{F}_q^{*}: x_1 x_2 \dotsc x_n = t y_1 y_2 \dotsc y_m\right\}.
\end{equation*}
If $\theta$ is a fixed non-trivial additive character of $\mathbb{F}_q$ and $A_1,A_2,\dotsc, A_n, B_1, B_2 \dotsc, B_m \in \widehat{\mathbb{F}_q^{*}}$ then the `hypergeometric sum' is defined  by
\begin{equation*}
{_{n}F_m} {\Biggl( \begin{array}{cccc} A_1, & A_2, & \dotsc, & A_n \\
B_1 & B_2, & \dotsc, & B_m \end{array}
\bigg| \; t \Biggr)}_{q}^{K}
:= \sum_{V} \theta\Biggl(\sum_{i=1}^n x_i - \sum_{j=1}^m y_j\Biggr) \prod_{i=1}^{n} A_i(x_i)  \prod_{j=1}^{m} \bar{B}_j(y_j).
\end{equation*}
This can be transformed by multiplicative Fourier inversion to get
\begin{equation*}\label{HS_Katz}
{_{n}F_m} {\Biggl( \begin{array}{cccc} A_1, & A_2, & \dotsc, & A_n \\
B_1 & B_2, & \dotsc, & B_m \end{array}
\bigg| \; t \Biggr)}_{q}^{K}
=  \frac{1}{q-1} \sum_{\chi}  \bar{\chi}(t) \prod_{i=1}^{n} g(A_i \chi) \prod_{j=1}^{m} g(\bar{B_j \chi}) \; {B_j \chi}(-1).
\end{equation*}

\noindent Thus we have the following direct relation between Katz's function and ${_{n+1}F_{n}}(\cdots)^{\star}$.
\begin{prop}\label{prop_relKatz}
For $A_0,A_1,\dotsc, A_n$ and $B_1 \dotsc, B_n \in \widehat{\mathbb{F}_q^{*}}$,
\begin{multline*}
{_{n+1}F_{n}} {\biggl( \begin{array}{cccc} A_0, & A_1, & \dotsc, & A_n \\
 \phantom{B_0,} & B_1, & \dotsc, & B_n \end{array}
\Big| \; x \biggr)}_{q}^{\star}\\
=
\left[\frac{1}{g(A_0)}\prod_{i=1}^{n} \frac{B_i(-1)}{g(A_i) \, g(\bar{B_i})}\right]
{_{n+1}F_{n+1}} {\biggl( \begin{array}{cccc} A_0, & A_1, & \dotsc, & A_n \\
 \varepsilon & B_1, & \dotsc, & B_n \end{array}
\Big| \; \frac{1}{x} \biggr)}^{K}_{q}.
\end{multline*}
\end{prop}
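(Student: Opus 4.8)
The plan is to prove the identity by direct substitution, starting from the multiplicative-Fourier-inverted expression for Katz's sum displayed just above the statement and matching it against Definition~\ref{def_HypFnFF} term by term. Both sides are single character sums over $\widehat{\mathbb{F}_q^{*}}$ of the same shape, so no substantial input is required beyond careful bookkeeping of the normalizing constant and the various sign characters.

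First I would write out the Katz function ${_{n+1}F_{n+1}}(\cdots)^{K}$ on the right explicitly, with top parameters $A_0,A_1,\dots,A_n$, bottom parameters $\varepsilon,B_1,\dots,B_n$, and argument $1/x$. Applying the Fourier-inverted formula, the summand over $\chi$ is $\bar\chi(1/x)\,\prod_{i=0}^{n}g(A_i\chi)\cdot g(\bar{\varepsilon\chi})\,(\varepsilon\chi)(-1)\cdot\prod_{j=1}^{n}g(\bar{B_j\chi})\,(B_j\chi)(-1)$. The two immediate reductions are that $\varepsilon\chi=\chi$, so the $\varepsilon$-slot contributes exactly $g(\bar\chi)\,\chi(-1)$, and that $\bar\chi(1/x)=\chi(x)$.

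Next I would collect the sign characters. Writing $(B_j\chi)(-1)=B_j(-1)\chi(-1)$, the product $\prod_{j=1}^{n}(B_j\chi)(-1)$ equals $\bigl[\prod_{j=1}^{n}B_j(-1)\bigr]\chi(-1)^{n}$, which together with the $\chi(-1)$ from the $\varepsilon$-slot yields a total factor $\bigl[\prod_{j=1}^{n}B_j(-1)\bigr]\chi(-1)^{n+1}$. At this stage the Katz summand reads $\chi(x)\,\chi(-1)^{n+1}\,\prod_{i=0}^{n}g(A_i\chi)\,g(\bar\chi)\,\prod_{j=1}^{n}g(\bar{B_j\chi})$, times the $\chi$-independent constant $\prod_{j=1}^{n}B_j(-1)$.

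Finally I would multiply through by the prefactor $\frac{1}{g(A_0)}\prod_{i=1}^{n}\frac{B_i(-1)}{g(A_i)\,g(\bar{B_i})}$. The two copies of $\prod_{i} B_i(-1)$ combine as $\prod_{i=1}^{n}B_i(-1)^{2}=1$, using $B_i(-1)^{2}=B_i(1)=1$; the Gauss-sum denominators assemble into $\prod_{i=0}^{n}g(A_i)$ and $\prod_{j=1}^{n}g(\bar{B_j})$, converting the summand into $\prod_{i=0}^{n}\frac{g(A_i\chi)}{g(A_i)}\prod_{j=1}^{n}\frac{g(\bar{B_j\chi})}{g(\bar{B_j})}\,g(\bar\chi)\,\chi(-1)^{n+1}\,\chi(x)$. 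This is precisely the summand in Definition~\ref{def_HypFnFF}, so the two expressions agree. The nearest thing to an obstacle is the tally of $(-1)$-factors: one must verify that the leftover $\chi(-1)$ from the trivial bottom character supplies exactly the power needed to upgrade $\chi(-1)^{n}$ to $\chi(-1)^{n+1}$, and that the squared $B_i(-1)$ terms cancel rather than leave a residual sign.
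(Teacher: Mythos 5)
Your proposal is correct and is exactly the argument the paper intends: the proposition is stated as an immediate consequence of the multiplicative-Fourier-inverted form of Katz's sum, obtained by specializing the bottom parameters to $\varepsilon, B_1,\dotsc,B_n$, the argument to $1/x$, and matching summands against Definition~\ref{def_HypFnFF}, with the sign bookkeeping ($\varepsilon\chi=\chi$ contributing $g(\bar\chi)\chi(-1)$, $\bar\chi(1/x)=\chi(x)$, and $B_i(-1)^2=1$) handled precisely as you do. No gaps; your verification of the $\chi(-1)^{n+1}$ tally is the only delicate point and you carry it out correctly.
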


\noindent Because the relationship in Proposition \ref{prop_relKatz} is unconditional, all results from Section 1 can be rewritten in terms of Katz's function. 
For example Theorem \ref{thm_7} can be restated as
\begin{multline*}
{_{n+1}F_{n+1}} {\biggl( \begin{array}{cccc} A_0, & A_1, & \dotsc, & A_n \\
 \varepsilon, & A_0\bar{A_1}, & \dotsc, & A_0\bar{A_n} \end{array}
\Big| \; x \biggr)}^{K}_{q}
=
g(\bar{A_0}A_{n-1} A_n) \bar{A_0}A_{n-1} A_n(-1) \times\\
\frac{1}{q-1} \sum_{\psi \in \widehat{\mathbb{F}_q^{*}}} 
g(A_{n-1} \psi) g(A_n \psi) \psi(-1) \;
{_{n}F_{n}} {\biggl( \begin{array}{ccccc} A_0, & A_1, & \dotsc, & A_{n-2}, & \bar{\psi} \\
 \varepsilon, & A_0\bar{A_1}, & \dotsc, & A_0\bar{A_{n-2}}, & A_0 \psi \end{array}
\Big| \; -x \biggr)}^{K}_{q}\\
+
q(q-1)  \delta({\bar{A_0}A_{n-1} A_n}) \;
{_{n-1}F_{n-1}} {\biggl( \begin{array}{cccc} A_0, & A_1, & \dotsc, & A_{n-2} \\
 \varepsilon, & A_0\bar{A_1}, & \dotsc, & A_0\bar{A_{n-2}} \end{array}
\Big| \; x \biggr)}^{K}_{q}.
\end{multline*}
We note that this is neater than the formula in Theorem \ref{thm_7}. However, we choose to use ${_{n+1}F_{n}}(\cdots)^{\star}$ as the resulting transformations more closely mirror their classical analogues.
Also it leads to neater relationships in those results related to Fourier coefficients of modular forms. 


\section{Preliminaries}\label{sec_Prelim}
In this section we recall some properties of Gauss and Jacobi sums. For further details see \cite{BEW}, noting that we have adjusted results to take into account $\varepsilon(0)=0$. 
As noted in Section \ref{sec_Intro} we  let $\widehat{\mathbb{F}^{*}_{q}}$ denote the group of multiplicative characters of $\mathbb{F}^{*}_{q}$. 
We extend the domain of $\chi \in \widehat{\mathbb{F}^{*}_{q}}$ to $\mathbb{F}_{q}$, by defining $\chi(0):=0$ (including the trivial character $\varepsilon$) and denote $\bar{\chi}$ as the inverse of $\chi$. 
We then have the following orthogonal relations. For a character $\chi$ of $\mathbb{F}_{q}$,
\begin{equation}\label{for_TOrthEl}
\sum_{x \in \mathbb{F}_q} \chi(x)=
\begin{cases}
q-1 & \text{if $\chi = \varepsilon$}  ,\\
0 & \text{if $\chi \neq \varepsilon$}  ,
\end{cases}
\end{equation}
and, for $x \in \mathbb{F}_q$, we have
\begin{equation}\label{for_TOrthCh}
\sum_{\chi \in \widehat{\mathbb{F}_q^{*}}} \chi(x)=
\begin{cases}
q-1 & \text{if $x=1$}  ,\\
0 & \text{if $x \neq 1$}  .
\end{cases}
\end{equation}

\noindent Let $\theta$ be a fixed non-trivial additive character of $\mathbb{F}_q$ and recall that for $\chi \in \widehat{\mathbb{F}^{*}_{q}}$ we define the Gauss sum  by $g(\chi):= \sum_{x \in \mathbb{F}_q} \chi(x) \theta(x)$.
The following important result gives a simple expression for the product of two Gauss sums based on inverse characters.
For a character $\chi$ of $\mathbb{F}_q$ we have
\begin{equation}\label{for_GaussConj}
g(\chi)g(\bar{\chi})=
\begin{cases}
\chi({-1}) q & \text{if } \chi \neq \varepsilon,\\
1 & \text{if } \chi= \varepsilon.
\end{cases}
\end{equation}
\noindent Recall also that for $\chi, \psi \in \widehat{\mathbb{F}^{*}_{q}}$ we define the Jacobi sum by $J(\chi,\psi):=\sum_{t \in \mathbb{F}_q} \chi(t) \psi(1-t).$
\noindent We can relate Jacobi sums to Gauss sums.
For $\chi$, $\psi \in \widehat{\mathbb{F}_q^{*}}$ not both trivial,
\begin{equation}\label{for_JactoGauss}
J(\chi, \psi)=
\begin{cases}
\dfrac{g(\chi)g(\psi)}{g(\chi \psi)}
& \qquad \text{if } \chi \psi \neq \varepsilon,\\[18pt]
-\dfrac{g(\chi)g(\psi)}{q}
&\qquad \text{if }\chi \psi = \varepsilon \: .
\end{cases}
\end{equation}

We now develop a couple of preliminary results which will be used in Section \ref{sec_Proofs}.
\begin{prop}\label{prop_sumJac}
For characters $A$ and $B \in \widehat{\mathbb{F}_q^{*}}$,
\begin{equation*}
 \sum_{\chi \in \widehat{\mathbb{F}_q^{*}}} J(A\chi, B \bar{\chi}) \chi({-1}) =0.
\end{equation*}
\end{prop}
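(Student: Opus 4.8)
The plan is to unfold the Jacobi sum from its definition, interchange the order of the two summations, and then collapse the sum over $\chi$ using the orthogonality relation \eqref{for_TOrthCh}. Concretely, I would first write, directly from $J(\chi,\psi)=\sum_{t}\chi(t)\psi(1-t)$,
\[
J(A\chi, B\bar{\chi}) = \sum_{t \in \mathbb{F}_q} A(t)\,\chi(t)\,B(1-t)\,\bar{\chi}(1-t).
\]
Substituting this into the left-hand side of the claimed identity and swapping the finite sums over $\chi$ and $t$ gives
\[
\sum_{\chi \in \widehat{\mathbb{F}_q^{*}}} J(A\chi, B\bar{\chi})\,\chi(-1)
= \sum_{t \in \mathbb{F}_q} A(t)\,B(1-t) \sum_{\chi \in \widehat{\mathbb{F}_q^{*}}} \chi(-t)\,\bar{\chi}(1-t),
\]
where I have absorbed $\chi(-1)$ into $\chi(t)$ to form $\chi(-t)$.

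Next I would dispose of the degenerate values $t=0$ and $t=1$. Under the convention $\chi(0):=0$, the external factor $A(t)$ annihilates the $t=0$ term and $B(1-t)$ annihilates the $t=1$ term, so only $t \in \mathbb{F}_q \setminus \{0,1\}$ contributes. For such $t$ both $-t$ and $1-t$ lie in $\mathbb{F}_q^{*}$, so $\chi(-t)\,\bar{\chi}(1-t) = \chi\!\left(\tfrac{-t}{1-t}\right)$ and the inner sum is $\sum_{\chi} \chi\!\left(\tfrac{-t}{1-t}\right)$. By \eqref{for_TOrthCh} this equals $q-1$ when $\tfrac{-t}{1-t}=1$ and $0$ otherwise. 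But $\tfrac{-t}{1-t}=1$ forces $-t = 1-t$, i.e. $0=1$, which is impossible; hence the inner sum vanishes for every admissible $t$, and the entire expression is $0$.

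There is no serious obstacle in this argument; the single point that warrants a little care is the treatment of the boundary arguments $t\in\{0,1\}$, where the rewriting $\chi(-t)\,\bar{\chi}(1-t)=\chi(-t/(1-t))$ is not legitimate. This is handled cleanly by observing that those two terms are already killed by the factors $A(t)$ and $B(1-t)$ via the convention $\chi(0)=0$, so they can be discarded before orthogonality is applied. Everything else is a direct interchange of finite sums followed by a single invocation of \eqref{for_TOrthCh}.
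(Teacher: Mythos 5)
Your proposal is correct and follows essentially the same route as the paper's own proof of Proposition \ref{prop_sumJac}: expand $J(A\chi,B\bar{\chi})$ from the definition, interchange the sums over $\chi$ and $t$, and collapse the inner sum $\sum_{\chi}\chi\bigl(\tfrac{-t}{1-t}\bigr)$ via the orthogonality relation \eqref{for_TOrthCh}, noting that $\tfrac{-t}{1-t}=1$ is impossible. Your explicit handling of the boundary terms $t\in\{0,1\}$ (killed by $A(t)$ and $B(1-t)$ under the convention $\chi(0)=0$) merely spells out a step the paper leaves implicit.
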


\begin{proof}
By definition of Jacobi sums and using (\ref{for_TOrthCh}) we see that
\begin{align*}
\sum_{\chi \in \widehat{\mathbb{F}_q^{*}}} J(A\chi, B \bar{\chi}) \chi(-1)
&= \sum_{\chi \in \widehat{\mathbb{F}_q^{*}}} \sum_{t \in \mathbb{F}_{q}}
A \chi(t) B \bar{\chi}(1-t) \chi(-1)\\
&= \sum_{t \in \mathbb{F}_{q}} A(t) B(1-t)
\sum_{\chi \in \widehat{\mathbb{F}_q^{*}}} 
\chi \left(\frac{-t}{1-t}\right)\\
&=0.
\end{align*}
\end{proof}

\begin{prop}\label{thm_2}
For characters $A$ and $B \in \widehat{\mathbb{F}_q^{*}}$,
\begin{equation*}
\frac{1}{q-1}  \sum_{\chi \in \widehat{\mathbb{F}_q^{*}}} g(A \chi) g(B \bar{\chi}) \chi(-1)
=
\begin{cases}
0 & \textup{if $AB \neq \varepsilon$,}\\
(q-1)A({-1}) & \textup{if $AB=\varepsilon$.}
\end{cases}
\end{equation*}
\end{prop}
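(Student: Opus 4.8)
The plan is to relate each summand to a Jacobi sum and then invoke Proposition \ref{prop_sumJac}. The key observation is that the product of the two Gauss-sum arguments is \emph{independent} of $\chi$: indeed $(A\chi)(B\bar{\chi}) = AB$. Consequently, whenever the conversion formula (\ref{for_JactoGauss}) applies, it expresses $g(A\chi)g(B\bar{\chi})$ as a fixed scalar (depending only on $A$ and $B$) times $J(A\chi, B\bar{\chi})$, and the weighted sum of these Jacobi sums vanishes by Proposition \ref{prop_sumJac}. I would split the argument according to whether $AB = \varepsilon$.

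First I would treat the case $AB \neq \varepsilon$. Here the two characters $A\chi$ and $B\bar{\chi}$ are never simultaneously trivial, since $A\chi = B\bar{\chi} = \varepsilon$ would force $\chi = \bar{A}$ and $\chi = B$, hence $AB = \varepsilon$, contrary to assumption. Thus the first branch of (\ref{for_JactoGauss}) applies for every $\chi$ (note $(A\chi)(B\bar{\chi}) = AB \neq \varepsilon$), giving $g(A\chi)g(B\bar{\chi}) = g(AB)\,J(A\chi, B\bar{\chi})$. Multiplying by $\chi(-1)$, summing over $\chi$, and pulling the constant $g(AB)$ outside reduces the sum to $g(AB)\sum_{\chi} J(A\chi, B\bar{\chi})\chi(-1)$, which is zero by Proposition \ref{prop_sumJac}.

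In the remaining case $AB = \varepsilon$, i.e.\ $B = \bar{A}$, it is cleanest to bypass Jacobi sums and apply (\ref{for_GaussConj}) directly: $g(A\chi)g(B\bar{\chi}) = g(A\chi)g(\overline{A\chi})$ equals $A\chi(-1)\,q$ when $A\chi \neq \varepsilon$ (that is, $\chi \neq \bar{A}$) and equals $1$ when $\chi = \bar{A}$. Since $\chi(-1)^2 = \chi(1) = 1$, each of the $q-2$ terms with $\chi \neq \bar{A}$ contributes $A(-1)\,q$ to $\sum_{\chi} g(A\chi)g(B\bar{\chi})\chi(-1)$, while the single term $\chi = \bar{A}$ contributes $\bar{A}(-1) = A(-1)$. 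The total is $A(-1)\bigl[(q-2)q + 1\bigr] = A(-1)(q-1)^2$, and dividing by $q-1$ yields $(q-1)A(-1)$, as claimed.

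The only delicate point, and the step I would be most careful about, is the bookkeeping of the degenerate characters where (\ref{for_JactoGauss}) or (\ref{for_GaussConj}) switches branch: verifying that $A\chi$ and $B\bar{\chi}$ are never both trivial when $AB \neq \varepsilon$, and isolating the unique exceptional character $\chi = \bar{A}$ when $AB = \varepsilon$. Everything else is a routine substitution.
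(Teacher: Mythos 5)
Your proposal is correct and is essentially the paper's own proof: the case $AB \neq \varepsilon$ is handled by converting each product of Gauss sums to $g(AB)\,J(A\chi, B\bar{\chi})$ via (\ref{for_JactoGauss}) and invoking Proposition \ref{prop_sumJac}, while the case $B = \bar{A}$ is computed directly from (\ref{for_GaussConj}) by isolating the single character $\chi = \bar{A}$, giving $A(-1)\left[q(q-2)+1\right] = A(-1)(q-1)^2$ before dividing by $q-1$. Your extra bookkeeping (checking $A\chi$ and $B\bar{\chi}$ are never both trivial when $AB \neq \varepsilon$) is a detail the paper leaves implicit, and it is verified correctly.
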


\begin{proof}
Applying (\ref{for_JactoGauss}) we see that for $AB \neq \varepsilon$
\begin{equation*}
\frac{1}{q-1}  \sum_{\chi \in \widehat{\mathbb{F}_q^{*}}} g(A \chi) g(B \bar{\chi}) \chi({-1})
= \frac{g(AB)}{q-1}  \sum_{\chi \in \widehat{\mathbb{F}_q^{*}}} J(A\chi, B \bar{\chi}) \chi({-1}),
\end{equation*}
which equals $0$ by Proposition \ref{prop_sumJac}. If $B=\bar{A}$, then by (\ref{for_GaussConj})
\begin{align*}
\frac{1}{q-1}  \sum_{\chi \in \widehat{\mathbb{F}_q^{*}}} g(A \chi) g(\bar{A\chi}) \chi({-1})
&= \frac{1}{q-1} \sum_{\substack{\chi \in \widehat{\mathbb{F}_q^{*}}\\ \chi \neq \bar{A}}} A({-1}) q +\frac{ \bar{A}({-1})}{q-1}\\
&=\frac{A({-1})}{q-1} [q(q-2)+1] = A({-1}) (q-1).
\end{align*}
\end{proof}


\section{Proofs}\label{sec_Proofs}
In this section we prove our results, starting with Theorem \ref{cor_3}. We will then prove Theorem \ref{thm_7} which will be the starting point for proving the other results.
As we will see, Theorem \ref{thm_7} is proved by using the analogue of Gauss' theorem (i.e., Theorem \ref{thm_HP}) in reverse. This strategy mirrors the method used by Whipple in proving his results.

\begin{proof}[Proof of Theorem \ref{cor_3}]
We will prove the slightly more general result which has no restrictions on the parameters.
\begin{theorem}\label{thm_3}
 For $A$, $B \in \widehat{\mathbb{F}_q^{*}}$,
\begin{equation*}
{_{2}F_1} \biggl( \begin{array}{cc} A, & B \vspace{.02in}\\
\phantom{A} & A\bar{B} \end{array}
\Big| \; {-1} \biggr)_{q}^{\star}
=
\begin{cases}
0 & \textup{if $A \neq \square$,}\\[4pt]
\displaystyle\sum_{R^2=A} \dfrac{ g(R) \, g(\bar{R}B) R(-1)}{g(A) \, g(\bar{A}B)} 
& \textup{otherwise.}
\end{cases}
\end{equation*}
\end{theorem}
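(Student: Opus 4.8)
The plan is to unwind Definition \ref{def_HypFnFF} in the case $n=1$ and reduce everything to an elementary character sum. Setting $A_0=A$, $A_1=B$, $B_1=A\bar{B}$ and $x=-1$, the sign factor collapses because $\chi(-1)^{n+1}\chi(x)=\chi(-1)^2\chi(-1)=\chi(-1)$, and after pulling out the constant $g(A)^{-1}g(B)^{-1}g(\bar{A}B)^{-1}$ one is left to evaluate
\[
S := \frac{1}{q-1}\sum_{\chi \in \widehat{\mathbb{F}_q^{*}}} g(A\chi)\,g(B\chi)\,g(\bar{A}B\bar{\chi})\,g(\bar{\chi})\,\chi(-1).
\]
This is almost the Helversen--Pasotto sum of Theorem \ref{thm_HP} with $(C,D)=(\bar{A}B,\varepsilon)$, but the extra twist $\chi(-1)$ is exactly what forces vanishing when $A$ is a non-square, so Theorem \ref{thm_HP} cannot be quoted directly.

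First I would expand each Gauss sum as $g(\psi)=\sum_t \psi(t)\theta(t)$, interchange the order of summation, and collect the $\chi$-dependence into the single factor $\chi(-wx/(yz))$. Applying the orthogonality relation (\ref{for_TOrthCh}) collapses the sum over $\chi$ to the constraint $yz=-wx$. Solving $z=-wx/y$ and then substituting $w=uy$ factors the remaining sum as
\[
S = \sum_{u \in \mathbb{F}_q^{*}} A(u)\Bigl(\sum_{x} B(x)\theta(x(1-u))\Bigr)\Bigl(\sum_{y} B(y)\theta(y(1+u))\Bigr).
\]
For $B\neq\varepsilon$ each inner sum is a twisted Gauss sum $\bar{B}(c)g(B)$, giving $S=g(B)^2\sum_{u}A(u)\bar{B}(1-u)\bar{B}(1+u)=g(B)^2\sum_u A(u)\bar{B}(1-u^2)$.

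The crux is then the evaluation of $T:=\sum_{u\in\mathbb{F}_q^{*}}A(u)\bar{B}(1-u^2)$. Pairing $u$ with $-u$ produces the factor $A(u)(1+A(-1))$; since (for $q$ odd) $-1$ is the unique element of order two in $\mathbb{F}_q^{*}$, the value $A(-1)$ records the parity of $A$ in the cyclic character group, so $A(-1)=-1$ exactly when $A$ is a non-square, yielding $T=0$ and the first case. When $A=R^2$ I would write $A(u)=R(u^2)$, fold the sum over the square $u^2$ (each nonzero square being hit twice), and detect squares via $\tfrac{1}{2}(1+\phi)$ with $\phi$ the quadratic character; this brings in both square roots $\{R,\phi R\}$ of $A$ and identifies $T=\sum_{R^2=A}J(R,\bar{B})$. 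Converting the Jacobi sums with (\ref{for_JactoGauss}) and tidying via $g(B)g(\bar{B})=B(-1)q$ and $g(R\bar{B})g(\bar{R}B)=R(-1)B(-1)q$ from (\ref{for_GaussConj}) then produces the stated $\sum_{R^2=A}g(R)g(\bar{R}B)R(-1)/(g(A)g(\bar{A}B))$.

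The main obstacle is bookkeeping over the degenerate parameter values, where the clean identities above fail. When $B=\varepsilon$ the two inner sums acquire extra contributions of size $q-1$ at $u=\pm1$ that the formula $\bar{B}(c)g(B)$ misses; when $A=B^2$ one square root satisfies $R\bar{B}=\varepsilon$, forcing the second branch of (\ref{for_JactoGauss}) and invalidating $g(R\bar{B})g(\bar{R}B)=R(-1)B(-1)q$; and cases with $A=\varepsilon$ or $A=B$ require tracking $g(\varepsilon)=-1$ carefully. Each of these is routine to check directly, and I expect them to be the only real labour, since the generic argument above carries the essential content.
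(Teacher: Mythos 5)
Your argument is correct and reaches the stated formula, but by a genuinely different route from the paper's. The paper never opens up the Gauss sums into additive characters: it applies the Helversen--Pasotto evaluation (Theorem \ref{thm_HP}) \emph{in reverse} to rewrite the product $g(A\chi)g(B\chi)g(\bar{A}B\bar{\chi})g(\bar{\chi})$ inside the defining sum, interchanges the $\chi$- and $\psi$-sums, and collapses the inner $\chi$-sum with Proposition \ref{thm_2} (itself resting on the Jacobi-sum vanishing of Proposition \ref{prop_sumJac}); only $\psi=\bar{R}$ with $R^2=A$ survives, so the non-square vanishing, the two-term sum over $R$, and all degenerate parameter values come out uniformly, with no case analysis whatsoever. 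You instead reduce by orthogonality to the Kummer-type sum $g(B)^2\sum_{u}A(u)\bar{B}(1-u^2)$, get the vanishing from the pairing $u\leftrightarrow -u$ (since $A(-1)=-1$ precisely for non-square $A$), detect squares via $1+\phi$ to identify the sum as $\sum_{R^2=A}J(R,\bar{B})$, and convert with $g(B)J(R,\bar{B})=g(R)g(\bar{R}B)R(-1)$. I checked this chain, and your flagged degenerate cases do work out as you predict: at $A=B^2$ the root $R=B$ gives $g(B)J(B,\bar{B})=-B(-1)g(B)$ by the second branch of (\ref{for_JactoGauss}), matching $g(R)g(\bar{R}B)R(-1)=g(B)g(\varepsilon)B(-1)$; at $B=\varepsilon$ with $A=\square\neq\varepsilon$ both sides equal $2$; at $A=B=\varepsilon$ both sides equal $q+1$ (and your worry about $A=\varepsilon$, $B\neq\varepsilon$ is unnecessary --- that case already passes through the generic chain, since (\ref{for_JactoGauss}) only needs $R\bar{B}\neq\varepsilon$). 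As for the trade-off: your route is elementary and self-contained, and it makes the vanishing mechanism completely transparent, but it tacitly assumes $q$ odd (the pairing and the existence of $\phi$; for even $q$ the non-square case is vacuous and squaring is a bijection on characters, so it is patchable) and it pays with degenerate-case bookkeeping that the paper's proof never sees. More importantly, the paper's ``Gauss summation in reverse'' step is precisely the engine that iterates to give Theorem \ref{thm_7} and thence all the higher well-poised transformations, whereas the direct additive-character expansion does not obviously scale beyond this first case.
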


\noindent By definition
\begin{equation}\label{wp_2F1}
{_{2}F_1} \biggl( \begin{array}{cc} A, & B \vspace{.02in}\\
\phantom{A} & A\bar{B} \end{array}
\Big| \; {-1} \biggr)_{q}^{\star}
= 
\frac{1}{q-1}  \sum_{\chi \in \widehat{\mathbb{F}_q^{*}}} 
\frac{g(A \chi)}{g(A)}
\frac{g(B \chi)}{g(B)}
 \frac{g(\bar{A} B \bar{\chi})}{g(\bar{A}B)}
g(\bar{\chi})
\chi(-1).
\end{equation}

\noindent By Theorem \ref{thm_HP}
\begin{multline}\label{HP_reverse_2F1}
g(A \chi) g(B \chi) g(\bar{A} B \bar{\chi}) g( \bar{\chi})\\
=
\frac{g(B)}{q-1} \sum_{\psi \in \widehat{\mathbb{F}_q^{*}}} 
g(A \psi) g(B \psi) g(\bar{A \chi \psi}) g(\chi \bar{\psi})
-g(B)q(q-1) AB(-1) \delta(B).
\end{multline}

\noindent Substituting (\ref{HP_reverse_2F1}) in to (\ref{wp_2F1}) yields
\begin{multline*}
{_{2}F_1} \biggl( \begin{array}{cc} A, & B \vspace{.02in}\\
\phantom{A} & A\bar{B} \end{array}
\Big| \; {-1} \biggr)_{q}^{\star}
= 
\frac{1}{q-1}  \sum_{\psi \in \widehat{\mathbb{F}_q^{*}}}
 \frac{g(A \psi) g(B \psi)}{g(A)g(\bar{A}B)}
\frac{1}{q-1}  \sum_{\chi \in \widehat{\mathbb{F}_q^{*}}} 
g(\bar{\psi} \chi ) g(\bar{A \psi \chi }) \chi(-1)\\
 -\frac{qAB(-1) \delta(B)}{g(A) g(\bar{A}B)}  \sum_{\chi \in \widehat{\mathbb{F}_q^{*}}} \chi(-1).
\end{multline*}
By (\ref{for_TOrthCh}) the second term above is 0. Using Proposition \ref{thm_2} to evaluate the first term yields Theorem \ref{thm_3}. If $A\neq \varepsilon$ then $R\neq \varepsilon$ and Theorem \ref{cor_3} follows on applying  (\ref {for_GaussConj}).
\end{proof}

\begin{proof}[Proof of Theorem \ref{thm_7}]
By definition
\begin{multline}\label{wp_gen}
{_{n+1}F_{n}} {\biggl( \begin{array}{cccc} A_0, & A_1, & \dotsc, & A_n \\
 \phantom{A_0,} & A_0\bar{A_1}, & \dotsc, & A_0\bar{A_n} \end{array}
\Big| \; x \biggr)}_{q}^{\star}\\
= 
\frac{1}{q-1}  \sum_{\chi \in \widehat{\mathbb{F}_q^{*}}} 
\frac{g(A_0 \chi)}{g(A_0)}
\prod_{i=0}^{n} \frac{g(A_i \chi)}{g(A_i)} \frac{g(\bar{A_0} A_i \bar{\chi})}{g(\bar{A_0}A_i)}
g(\bar{\chi})
\chi(-1)^{n+1}
\chi(x).
\end{multline}

\noindent By Theorem \ref{thm_HP}
\begin{multline}\label{HP_reverse_gen}
g(A_{n-1} \chi) g(A_{n} \chi) g(\bar{A_0} A_{n-1} \bar{\chi}) g(\bar{A_0} A_{n} \bar{\chi})
=\\
\frac{g(\bar{A_0}A_{n-1} A_n)}{q-1} \sum_{\psi \in \widehat{\mathbb{F}_q^{*}}} 
g(A_{n-1} \psi) g(A_n \psi) g(\bar{A_0 \chi \psi}) g(\chi \bar{\psi})\\
-g(\bar{A_0}A_{n-1} A_n)q(q-1) A_nA_{n-1}(-1) \delta({\bar{A_0}A_{n-1} A_n}).
\end{multline}
Substituting (\ref{HP_reverse_gen}) in to (\ref{wp_gen}) and tidying up yields the result. 
\end{proof}

\begin{proof}[Proof of Theorem \ref{cor_4}]
Again, we prove a more general result from which Theorem \ref{cor_4} can easily be derived.
\begin{theorem}\label{thm_4}
 For $A$, $B$, $C \in \widehat{\mathbb{F}_q^{*}}$,
\begin{multline*}
{_{3}F_2} \biggl( \begin{array}{ccc} A, & B, & C \vspace{.02in}\\
\phantom{A} & A\bar{B}, & A\bar{C} \end{array}
\Big| \; 1 \biggr)_{q}^{\star}\\
=
\begin{cases}
0 & \textup{if $A \neq \square$,}\\[9pt]
\displaystyle\sum_{R^2=A} \dfrac{ g(\bar{A}BC)  \, g(\bar{R}B) \, g(\bar{R}C) \, g(R) \, g(R\bar{BC})}{ g(\bar{A}B) \, g(\bar{A}C) \,  g(A) BC(-1) \, q} 
& \genfrac{}{}{0pt}{0}{\textup{if $A = \square$ and $(BC)^2 \neq A$, } \textit{or} \phantom{AAA\;\;}}{\textup{if $(BC)^2 = A \neq BC$, $B \neq \varepsilon$, $C \neq \varepsilon$,}}\\[21pt]
-q+3 &\genfrac{}{}{0pt}{0} {\textup{if $(BC)^2 = A  \neq  BC$, $A \neq \varepsilon$, $B$ or $C= \varepsilon$, } \textit{or}}{\textup{if $A=\varepsilon$, $C=\bar{B}$, $B \neq \varepsilon, \phi$,}\phantom{AAAAAAAAAA}}\\[18pt]
-q^2+2q+1 & \textup{if $A=\varepsilon$, $BC=\phi$, $B=\varepsilon$ or $B=\phi$,}\\[12pt] 
-q^3+q^2+q+1 & \textup{if $A=B=C=\varepsilon$,}\\[12pt]
\frac{-q^2+2q+1}{q} & \textup{if $A=\varepsilon$, $B=C=\phi$.}
\end{cases}
\end{multline*}
\end{theorem}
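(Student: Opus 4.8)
The plan is to obtain Theorem \ref{thm_4} as a direct consequence of the fundamental reduction in Theorem \ref{thm_7}, which is exactly why that result is established first. Specializing Theorem \ref{thm_7} to $n=2$ with $A_0=A$, $A_1=B$, $A_2=C$ and $x=1$ rewrites the well-poised ${_{3}F_{2}}(\cdots\mid 1)_{q}^{\star}$ as the prefactor $g(\bar{A}BC)/\bigl(g(B)g(C)g(\bar{A}B)g(\bar{A}C)\bigr)$ times

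$$\frac{1}{q-1}\sum_{\psi}g(B\psi)g(C\psi)g(\bar{\psi})g(\bar{A}\bar{\psi})\;{_{2}F_{1}}\biggl(\begin{array}{cc}A, & \bar{\psi}\\ \phantom{A} & A\psi\end{array}\;\Big|\;{-1}\biggr)_{q}^{\star},$$

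plus a $\delta(\bar{A}BC)$-term carrying a factor of ${_{1}F_{0}}(A\mid 1)_{q}^{\star}$. The crucial point is that the inner ${_{2}F_{1}}(A,\bar{\psi};A\psi\mid -1)_{q}^{\star}$ is itself well-poised at $-1$, so I would evaluate it with the general Theorem \ref{thm_3}. Since that evaluation vanishes identically whenever $A$ is not a square, the entire first term collapses in that case; a quick application of Proposition \ref{thm_2} shows ${_{1}F_{0}}(A\mid 1)_{q}^{\star}=0$ unless $A=\varepsilon$ (which is a square), so the $\delta$-term disappears too, yielding the first line of the statement.

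When $A=\square$, Theorem \ref{thm_3} replaces the inner function by $\sum_{R^2=A}g(R)g(\bar{R}\bar{\psi})R(-1)/\bigl(g(A)g(\bar{A}\bar{\psi})\bigr)$. After interchanging the $\psi$- and $R$-summations, the factor $g(\bar{A}\bar{\psi})$ supplied by Theorem \ref{thm_7} cancels against the $g(\bar{A}\bar{\psi})$ in the denominator, and each $R$-summand reduces to

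$$\frac{1}{q-1}\sum_{\psi}g(B\psi)g(C\psi)g(\bar{\psi})g(\bar{R}\bar{\psi})$$

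weighted by $g(\bar{A}BC)g(R)R(-1)/\bigl(g(A)g(B)g(C)g(\bar{A}B)g(\bar{A}C)\bigr)$. This $\psi$-sum is precisely the left-hand side of Helversen-Pasotto (Theorem \ref{thm_HP}) with parameters $B$, $C$, $\varepsilon$, $\bar{R}$, and so equals $g(B)g(B\bar{R})g(C)g(C\bar{R})/g(BC\bar{R})+q(q-1)BC(-1)\delta(BC\bar{R})$. Keeping the non-$\delta$ part, cancelling $g(B)$ and $g(C)$, and rewriting $R(-1)/g(BC\bar{R})$ through the reflection formula (\ref{for_GaussConj}) as $g(R\bar{BC})/\bigl(BC(-1)\,q\bigr)$ collapses the generic summand to the advertised main-case expression.

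The routine calculation ends here, and the real difficulty — the place I expect the main obstacle — is the exhaustive treatment of the degenerate cases producing the lower lines of the statement. The generic step above used the reflection formula in its non-trivial-character form and silently discarded both $\delta$-contributions; these are exactly the features that fail when characters collide. The Helversen-Pasotto term $\delta(BC\bar{R})$ fires only for a root $R=BC$, which requires $(BC)^2=A$ and is the very root for which $g(BC\bar{R})=g(\varepsilon)=-1$ invalidates the reflection step; the Theorem \ref{thm_7} term $\delta(\bar{A}BC)$ fires only when $BC=A$, which combined with ${_{1}F_{0}}(A\mid 1)_{q}^{\star}\neq 0$ forces $A=\varepsilon$; and additional collapses occur when $B$ or $C$ equals $\varepsilon$ (so $g(B)$ or $g(C)$ is $-1$), when $A=\varepsilon$ with $C=\bar{B}$, or when $BC=\phi$. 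I would stratify the parameter space along these coincidences, in each stratum restoring the dropped $\delta$-terms and inserting the exceptional values $g(\varepsilon)=-1$ and $g(\chi)g(\bar{\chi})=\chi(-1)q$ from (\ref{for_GaussConj}) to produce the constants $-q+3$, $-q^{2}+2q+1$, $-q^{3}+q^{2}+q+1$ and $(-q^{2}+2q+1)/q$. Tracking which of the two square roots of $A$ triggers each degeneracy, and disentangling the overlaps among the defining conditions, is the delicate bookkeeping that carries the weight of the argument.
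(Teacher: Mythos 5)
Your proposal is correct and essentially identical to the paper's own proof: it specializes Theorem \ref{thm_7} to $n=2$, evaluates the inner well-poised ${_{2}F_{1}}(\cdots \mid -1)_{q}^{\star}$ via the unconditional Theorem \ref{thm_3}, recognizes the remaining $\psi$-sum as Helversen--Pasotto with parameters $B$, $C$, $\varepsilon$, $\bar{R}$, and applies (\ref{for_GaussConj}) exactly as the paper does --- including the key observation that the $\delta(BC\bar{R})$ contribution and the failure of the reflection formula occur at the very same root $R=BC$, and that the $\delta(\bar{A}BC)$ term survives only when $A=\varepsilon$. The only difference is completeness rather than method: the paper actually carries out the degenerate-case bookkeeping you defer (via the correction factor $1-q(q-1)R_1(-1)/\bigl(g(B)\,g(\bar{B})\,g(R_1\bar{B})\,g(\bar{R_1}B)\bigr)$ and Proposition \ref{thm_2} for the $T_2$ term), confirming the constants $-q+3$, $-q^2+2q+1$, $-q^3+q^2+q+1$ and $(-q^2+2q+1)/q$ along precisely the strata you identify.
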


\noindent Applying Theorem \ref{thm_7} we see that
\begin{multline}\label{for_thm4_thm7}
{_{3}F_2} \biggl( \begin{array}{ccc} A, & B, & C \vspace{.02in}\\
\phantom{A} & A\bar{B}, & A\bar{C} \end{array}
\Big| \; 1 \biggr)_{q}^{\star}\\
=
\frac{1}{q-1} \sum_{\psi \in \widehat{\mathbb{F}_q^{*}}} 
\frac{g(\bar{A}B C) g(B \psi) g(C \psi) g(\bar{\psi}) g(\bar{A}\bar{\psi})}{g(B) g(C) g(\bar{A}B) g(\bar{A}C)} \:
{_{2}F_{1}} {\biggl( \begin{array}{cc} A, & \bar{\psi} \\
 \phantom{A,}  &  A\psi \end{array}
\Big| \; {-1} \biggr)}_{q}^{\star}\\
+
\frac{q \, BC(-1) \delta({\bar{A}BC})}{g(B) g(C) g(\bar{A}B) g(\bar{A}C)}
\sum_{\chi \in \widehat{\mathbb{F}_q^{*}}} 
\frac{g(A \chi)}{g(A)}
g(\bar{\chi})
\chi(-1).
\end{multline}
For brevity we will refer to the two terms on the right-hand side of (\ref{for_thm4_thm7}) as $T_1$ and $T_2$ respectively. Using Proposition \ref{thm_2} we get that
\begin{equation}\label{for_thm7_T2}
T_2 = 
\begin{cases}
0 & \textup{if $A \neq BC$} \textit{ or } \textup{if $A=BC\neq \varepsilon$}  \\[4pt]
-\frac{{(q-1)}^2}{q} & \textup{if $A=BC=\varepsilon$ and $B \neq \varepsilon$}\\[4pt]
-q{(q-1)}^2 & \textup{if $A=B=C=\varepsilon$.} 
\end{cases}
\end{equation}

\noindent We now focus on $T_1$ and use Theorem \ref{thm_3} to evaluate the $_2F_1$. Therefore
\begin{equation*}
 T_1 = 
\begin{cases}
0 & \textup{if $A \neq \square$}\\[4pt]
\displaystyle\sum_{R^2=A} 
\dfrac{g(\bar{A}B C) \, g(R) \,  R(-1)}{g(B) g(C) g(\bar{A}B) g(\bar{A}C) g(A)}
\frac{1}{q-1} \sum_{\psi \in \widehat{\mathbb{F}_q^{*}}} 
g(B \psi) g(C \psi) g(\bar{\psi}) g(\bar{R \psi})
& \textup{otherwise.}
\end{cases}
\end{equation*}
We now assume $A$ is a square and use Theorem \ref{thm_HP} to simplify $T_1$ in this case. This yields
\begin{multline*}
T_1=
\displaystyle\sum_{R^2=A} 
\dfrac{g(\bar{A}B C) \, g(R) \, g(\bar{R}B) \, g(\bar{R}C) \, R(-1)}{g(\bar{A}B) g(\bar{A}C) g(A) g(\bar{R}BC)}
+
\displaystyle\sum_{R^2=A} 
\dfrac{g(\bar{A}B C) \, g(R) \, q(q-1) \delta(\bar{R}BC)}{g(B) g(C) g(\bar{A}B) g(\bar{A}C) g(A)}.
\end{multline*}

\noindent If $(BC)^2 \neq A$ the second sum equals zero and using (\ref{for_GaussConj}) we see that
\begin{equation*}
T_1=
\displaystyle\sum_{R^2=A} \dfrac{ g(\bar{A}BC)  \, g(\bar{R}B) \, g(\bar{R}C) \, g(R) \, g(R\bar{BC})}{ g(\bar{A}B) \, g(\bar{A}C) \,  g(A) BC(-1) \, q} .
\end{equation*}
\noindent If $A$ is a square then there are exactly two characters $R_1$ and $R_2$ such that $(R_i)^2=A$ (and $R_2 =\phi R_1$).  If $(BC)^2 = A$, we assume $BC=R_1$ and note that $\bar{R_2}BC = \phi \neq \varepsilon$.
Therefore, in the case $(BC)^2=A$,
\begin{align*}
T_1&=
 \displaystyle\sum_{R^2=A} 
\dfrac{g(\bar{A}B C) \, g(R) \, g(\bar{R}B) \, g(\bar{R}C) \, R(-1)}{g(\bar{A}B) g(\bar{A}C) g(A) g(\bar{R}BC)}
+
\dfrac{g(\bar{A}B C) \, g(R_1) \, q(q-1)}{g(B) g(C) g(\bar{A}B) g(\bar{A}C) g(A)}\\[4pt]
&=
 \dfrac{ g(\bar{A}BC)  \, g(\bar{R_2}B) \, g(\bar{R_2}C) \, g(R_2) \, g(R_2\bar{BC})}{ g(\bar{A}B) \, g(\bar{A}C) \,  g(A) BC(-1) \, q} \\[4pt]
& \qquad  +
\dfrac{ g(\bar{A}BC)  \, g(\bar{R_1}B) \, g(\bar{R_1}C) \, g(R_1) \, g(R_1\bar{BC})}{ g(\bar{A}B) \, g(\bar{A}C) \,  g(A) BC(-1) }
\times \left[1-
\dfrac{q(q-1) R_1(-1)}{g(B)\, g(\bar{B}) \, g(R_1 \bar{B}) \, g(\bar{R_1} B)}
\right].
\end{align*}
Applying  (\ref{for_GaussConj}) yields
\begin{equation*}
 1-\dfrac{q(q-1) R_1(-1)}{g(B)\, g(\bar{B}) \, g(R_1 \bar{B}) \, g(\bar{R_1} B)}
=
\begin{cases}
\frac{1}{q} & \textup{if $B\neq \varepsilon$ and $C\neq \varepsilon$},\\[4pt]
2-q & \textup{if $B= \varepsilon$ or $C = \varepsilon$ but not both},\\[4pt]
-q^2+q+1  & \textup{if $B= \varepsilon$ and $C = \varepsilon$}.
\end{cases}
\end{equation*}

\noindent Overall then
\begin{equation}\label{for_thm7_T1}
T_1=
\begin{cases}
0 & \textup{if $A \neq \square$,}\\[9pt]
\displaystyle\sum_{R^2=A} \dfrac{ g(\bar{A}BC)  \, g(\bar{R}B) \, g(\bar{R}C) \, g(R) \, g(R\bar{BC})}{ g(\bar{A}B) \, g(\bar{A}C) \,  g(A) BC(-1) \, q} 
& \genfrac{}{}{0pt}{0}{\textup{if $A = \square$ and $(BC)^2 \neq A$, } \textit{or} \phantom{AA}}{\textup{if $(BC)^2 = A$, $B \neq \varepsilon$ and $C \neq \varepsilon$,}}\\[21pt]
 -q+3  & \textup{if $(BC)^2 = A\neq \varepsilon$, $B$ or $C= \varepsilon$,}\\[18pt] 
 -q^2+2q+1 &  \genfrac{}{}{0pt}{0} {\textup{if $B^2=C^2 = A= \varepsilon$ and  $B \neq C$, } \textit{or}}{\textup{if $A=B=C= \varepsilon$.}\phantom{AAAAAAAAA\;\;}}\\[18pt]
\end{cases}
\end{equation}

\noindent Combining (\ref{for_thm7_T2}) and (\ref{for_thm7_T1}) yields Theorem \ref{thm_4} and Theorem \ref{cor_4} easily follows.

\end{proof}

\begin{proof}[Proof of Theorem \ref{cor_5}]

\noindent By Theorem \ref{thm_7}
\begin{multline}\label{for_thm5_thm7}
{_{4}F_3} \biggl( \begin{array}{cccc} A, & B, & C, & D \vspace{.02in}\\
\phantom{A} & A\bar{B}, & A\bar{C}, & A\bar{D}  \end{array}
\Big| \; {-1} \biggr)_{q}^{\star}
=
\frac{g(\bar{A}CD)}{g(C) g(D) g(\bar{A}C) g(\bar{A}D)} \times\\[4pt]
\frac{1}{q-1} \sum_{\psi \in \widehat{\mathbb{F}_q^{*}}} 
g(C \psi) g(D \psi) g(\bar{\psi}) g(\bar{A}\bar{\psi}) \;
{_{3}F_{2}} {\biggl( \begin{array}{ccc} A, & B, & \bar{\psi} \\
 \phantom{A,} & A\bar{B}, & A\psi \end{array}
\Big| \; 1 \biggr)}_{q}^{\star}\\[4pt]
+
\frac{q(q-1) CD(-1) \delta({\bar{A}CD})}{g(C) g(D) g(\bar{A}C) g(\bar{A}D)} \;
{_{2}F_{1}} {\biggl( \begin{array}{cc} A, & B \\
 \phantom{A,} & A\bar{B} \end{array}
\Big| \; {-1} \biggr)}_{q}^{\star}.
\end{multline}

\noindent By Theorems \ref{thm_4} and \ref{thm_3} we see that both terms on the right-hand side of (\ref{for_thm5_thm7}) equal zero if $A$ is not a square.
If $A \neq \varepsilon$ is a square, $B \neq \varepsilon$ and $B^2 \neq A$ then Theorem \ref{thm_4} tells us that, for all $\psi$,
\begin{equation}\label{for_thm5_thm4}
{_{3}F_{2}} {\biggl( \begin{array}{ccc} A, & B, & \bar{\psi} \\
 \phantom{A,} & A\bar{B}, & A\psi \end{array}
\Big| \; 1 \biggr)}_{q}^{\star}
=
\displaystyle\sum_{R^2=A} \dfrac{ g(\bar{A}B \bar{\psi})  \, g(\bar{R}B) \, g(\bar{R}\bar{\psi}) \, g(R) \, g(R\bar{B} \psi)}{ g(\bar{A}B) \, g(\bar{A\psi}) \,  g(A) B\bar{\psi}(-1) \, q} .
\end{equation}

\noindent Substituting (\ref{for_thm5_thm4}) into (\ref{for_thm5_thm7}) and rearranging yields the following theorem. 
\begin{theorem}\label{thm_5}
For $A$, $B$, $C$, $D \in \widehat{\mathbb{F}_q^{*}}$,
\begin{multline*}
{_{4}F_3} \biggl( \begin{array}{cccc} A, & B, & C, & D \vspace{.02in}\\
\phantom{A} & A\bar{B}, & A\bar{C}, & A\bar{D}  \end{array}
\Big| \; {-1} \biggr)_{q}^{\star}\\
=
\begin{cases}
0 & \textup{if $A \neq \square$,}\\[15pt]
\dfrac{g(\bar{A}) \, g(\bar{A}CD) }{ g(\bar{A}C) \, g( \bar{A}D)}
\displaystyle\sum_{R^2=A} 
{_{3}F_2} \biggl( \begin{array}{ccc} R\bar{B}, & C, & D \vspace{.02in}\\
\phantom{A} & R, & A\bar{B} \end{array}
\Big| \; 1 \biggr)_{q}^{\star}\\[21pt]
\quad+
\displaystyle\frac{q(q-1) \delta({\bar{A}CD})}{g(C) g(\bar{C}) g(A \bar{C}) g(\bar{A}C) }
{_{2}F_{1}} {\biggl( \begin{array}{cc} A, & B \\
 \phantom{A,} & A\bar{B} \end{array}
\Big| \; {-1} \biggr)}_{q}^{\star}
& \genfrac{}{}{0pt}{0}{\textup{if $A = \square$,  $A \neq \varepsilon$, } \phantom{AA}}{\textup{$B \neq \varepsilon$ and $B^2 \neq A$.}}
\end{cases}
\end{multline*}
\end{theorem}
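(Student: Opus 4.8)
The plan is to take equation (\ref{for_thm5_thm7})---which is just Theorem \ref{thm_7} specialised to $n=3$---as the starting point and to evaluate the two inner hypergeometric functions it contains by the summation results already established. The right-hand side of (\ref{for_thm5_thm7}) is a $\psi$-sum of Gauss sums against an inner ${_3F_2}(A,B,\bar\psi;A\bar B,A\psi\,|\,1)_q^{\star}$, plus a $\delta$-supported term carrying ${_2F_1}(A,B;A\bar B\,|\,{-1})_q^{\star}$. So the whole argument reduces to (i) killing both terms when $A$ is not a square, and (ii) collapsing the double sum to the advertised single ${_3F_2}(\cdots)_q^{\star}$ when $A$ is a nontrivial square.

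For step (i) I would observe that the inner ${_3F_2}$ vanishes for every $\psi$ by the first ($A\neq\square$) case of Theorem \ref{thm_4}, and the inner ${_2F_1}$ vanishes by Theorem \ref{thm_3}; hence the entire right-hand side of (\ref{for_thm5_thm7}) is zero, matching the first case of the statement.

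For step (ii), under the hypotheses $A=\square$, $A\neq\varepsilon$, $B\neq\varepsilon$, $B^2\neq A$, I would substitute the closed form (\ref{for_thm5_thm4}) for the inner ${_3F_2}$ (which holds for \emph{all} $\psi$: the condition $B^2\neq A$ is exactly what keeps $\psi=\varepsilon$ in the first case of Theorem \ref{thm_4}, while $B\neq\varepsilon$ covers the sub-case $(B\bar\psi)^2=A$, $\psi\neq\varepsilon$), interchange the $R$- and $\psi$-summations, and pull out of the $\psi$-sum every factor independent of $\psi$. Two simplifications drive this: the factor $g(\bar A\bar\psi)$ in the outer product of (\ref{for_thm5_thm7}) cancels the $g(\bar{A}\bar\psi)$ in the denominator of (\ref{for_thm5_thm4}), and the scalar $1/(B\bar\psi({-1}))$ equals $B({-1})\psi({-1})$ because $B\bar\psi({-1})=\pm1$ is its own inverse. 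What survives inside the $\psi$-sum is exactly $g(R\bar B\psi)\,g(C\psi)\,g(D\psi)\,g(\bar\psi)\,g(\bar R\bar\psi)\,g(\bar A B\bar\psi)\,\psi({-1})$; by Definition \ref{def_HypFnFF} with $n=2$, together with the $1/(q-1)$ already present in (\ref{for_thm5_thm7}), this produces $g(R\bar B)\,g(C)\,g(D)\,g(\bar R)\,g(\bar A B)$ times ${_3F_2}(R\bar B,C,D;R,A\bar B\,|\,1)_q^{\star}$.

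I expect the only delicate point to be reducing the remaining $\psi$-independent prefactor to $g(\bar A)\,g(\bar A CD)/\bigl(g(\bar A C)\,g(\bar A D)\bigr)$. After the normalisers $g(C),g(D),g(\bar A B)$ cancel, one is left per $R$ with $g(\bar R B)\,g(R\bar B)\,g(R)\,g(\bar R)$ times $B({-1})/\bigl(g(A)\,q\bigr)$, and I would finish by repeated use of the reflection formula (\ref{for_GaussConj}): since $A\neq\varepsilon$ is a square, $R\neq\varepsilon$ and $g(R)g(\bar R)=R({-1})q$, while $B^2\neq A$ forces $\bar R B\neq\varepsilon$ so that $g(\bar R B)g(R\bar B)=R({-1})B({-1})q$; their product is $B({-1})q^2$, and multiplying by $B({-1})/(g(A)q)$ gives $q/g(A)=g(\bar A)$, the last equality using $g(A)g(\bar A)=A({-1})q=q$ as $A({-1})=R({-1})^2=1$. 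Finally, the $\delta$-supported second term of (\ref{for_thm5_thm7}) is rewritten via $\bar A CD=\varepsilon$ (so $g(D)=g(A\bar C)$, $g(\bar A D)=g(\bar C)$, and $CD({-1})=A({-1})=1$) to match the stated coefficient, completing the proof of Theorem \ref{thm_5}.
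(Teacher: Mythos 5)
Your proposal is correct and follows essentially the same route as the paper: specialise Theorem \ref{thm_7} to $n=3$ to obtain (\ref{for_thm5_thm7}), kill both terms via Theorems \ref{thm_4} and \ref{thm_3} when $A\neq\square$, and otherwise substitute (\ref{for_thm5_thm4}) (valid for all $\psi$ under $A\neq\varepsilon$, $B\neq\varepsilon$, $B^2\neq A$) and recognise the $\psi$-sum as the ${_3F_2}(\cdots)_q^{\star}$ of Definition \ref{def_HypFnFF}. The paper compresses your final computation into ``substituting and rearranging,'' and your explicit reduction of the prefactor to $g(\bar A)$ via (\ref{for_GaussConj}) (using $A(-1)=R(-1)^2=1$ and $\bar RB\neq\varepsilon$) correctly supplies the omitted details.
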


\noindent Theorem \ref{cor_5} follows when we impose the additional condition $CD \neq A$ when $A$ is a square.

\end{proof}

\begin{remark}
We can also use Theorem \ref{thm_4} to evaluate (\ref{for_thm5_thm7}) when the conditions $A \neq \varepsilon$, $B \neq \varepsilon$ and $B^2 \neq A$ are not satisfied. 
In this case it will be necessary to consider certain values of $\psi$ separately. However the results are not as neat as the main cases. For example, if $A=\varepsilon$ and $B=\phi$ then
\begin{multline*}
{_{4}F_3} \biggl( \begin{array}{cccc} \varepsilon, & \phi, & C, & D \vspace{.02in}\\
\phantom{\varepsilon} & \phi, & \bar{C}, & \bar{D}  \end{array}
\Big| \; {-1} \biggr)_{q}^{\star}
=
-\dfrac{ g(CD) }{ g(C) \, g(D)}
\Biggl[ \displaystyle\sum_{R^2=\varepsilon} 
{_{3}F_2} \biggl( \begin{array}{ccc} R \phi, & C, & D \vspace{.02in}\\
\phantom{R \phi,} & R, & \phi \end{array}
\Big| \; 1 \biggr)_{q}^{\star}\\
+ (q-1) \left(1 + \displaystyle\frac{g(C\phi) \, g(D\phi) \, \phi(-1)}{g(C) \, g(D)}\right)\Biggr]
+ \displaystyle\frac{q(q-1) \delta({CD})}{g(C)^2 g(\bar{C})^2  }
(1 + \phi(-1)).
\end{multline*}
\end{remark}

\begin{proof}[Proof of Theorem \ref{cor_6}]
 \noindent By Theorem \ref{thm_7}
\begin{multline}\label{for_thm6_thm7}
{_{5}F_{4}} {\biggl( \begin{array}{ccccc} A, & B, & C, & D, & E \\
 \phantom{A} & A\bar{B}, & A\bar{C},  & A\bar{D}, & A\bar{E} \end{array}
\Big| \; 1 \biggr)}_{q}^{\star}
=
\frac{g(\bar{A}DE)}{g(D) g(E) g(\bar{A}D) g(\bar{A}E)} \times\\
\frac{1}{q-1} \sum_{\psi \in \widehat{\mathbb{F}_q^{*}}} 
g(D \psi) g(E \psi) g(\bar{\psi}) g(\bar{A}\bar{\psi}) \,
{_{4}F_{3}} {\biggl( \begin{array}{cccc} A, & B, & C, & \bar{\psi} \\
 \phantom{A,} & A\bar{B}, &  A\bar{C}, & A \psi \end{array}
\Big| \; {-1} \biggr)}_{q}^{\star}\\
+
\frac{q(q-1) DE(-1) \delta(\bar{A}DE)}{g(D) g(E) g(\bar{A}D) g(\bar{A}E)}\,
{_{3}F_{2}} {\biggl( \begin{array}{ccc} A, & B, &C \\
 \phantom{A,} & A\bar{B}, & A\bar{C} \end{array}
\Big| \; 1 \biggr)}_{q}^{\star}.
\end{multline}

\noindent By Theorems \ref{thm_5} and \ref{thm_4} we see that both terms on the right-hand side of (\ref{for_thm6_thm7}) equal zero if $A$ is not a square.
If $A \neq \varepsilon$ is a square, $B \neq \varepsilon$ and $B^2 \neq A$ then by Theorem \ref{thm_5}
\begin{multline}\label{for_thm6_thm5}
\frac{1}{q-1} \sum_{\psi \in \widehat{\mathbb{F}_q^{*}}} 
\displaystyle\frac{g(D \psi) g(E \psi) g(\bar{\psi}) g(\bar{A}\bar{\psi})}{g(D) g(E) } \,
{_{4}F_{3}} {\biggl( \begin{array}{cccc} A, & B, & C, & \bar{\psi} \\
 \phantom{A,} & A\bar{B}, &  A\bar{C}, & A \psi \end{array}
\Big| \; {-1} \biggr)}_{q}^{\star}\\
=
\frac{1}{q-1} \sum_{\psi \in \widehat{\mathbb{F}_q^{*}}} 
\dfrac{g(D \psi) g(E \psi) g(\bar{\psi}) g(\bar{A}) \, g(\bar{A}C \bar{\psi})}{g(D) g(E) g(\bar{A}C)} \,
\displaystyle\sum_{R^2=A} 
{_{3}F_2} \biggl( \begin{array}{ccc} R\bar{B}, & C, & \bar{\psi} \vspace{.02in}\\
\phantom{A} & R, & A\bar{B} \end{array}
\Big| \; 1 \biggr)_{q}^{\star}\\
+
\displaystyle\frac{q \, g( \bar{A} CD) g( \bar{A} CE ) }{g(C) g(D) g(E)  g(\bar{A}C)} \,
{_{2}F_{1}} {\biggl( \begin{array}{cc} A, & B \\
 \phantom{A,} & A\bar{B} \end{array}
\Big| \; {-1} \biggr)}_{q}^{\star}.
\end{multline}

\noindent For brevity we will refer to the first term on the right-hand side of (\ref{for_thm6_thm5}) as $T$. We expand the $_3F_2$ by definition to get
\begin{multline*}
T = 
\dfrac{g(\bar{A}) }{g(D) g(E) g(\bar{A}C)} \displaystyle\sum_{R^2=A} 
\dfrac{1}{(q-1)^2}
\sum_{\chi \in \widehat{\mathbb{F}_q^{*}}} 
\dfrac{g(R \bar{B} \chi) g(C \chi) g(\bar{R \chi}) g(\bar{A}B \bar{\chi}) g(\bar{\chi}) \chi(-1)}{g(R \bar{B}) g(C) g(\bar{R}) g(\bar{A}B)}\\
\times
\sum_{\psi \in \widehat{\mathbb{F}_q^{*}}} 
g(D \psi) g(E \psi) g(\bar{A}C \bar{\psi}) g(\chi \bar{\psi}).
\end{multline*}
We now let $\psi \to \chi\psi$ to get
\begin{multline*}
T = 
\dfrac{g(\bar{A}) }{g(D) g(E) g(\bar{A}C)} \displaystyle\sum_{R^2=A} 
\dfrac{1}{(q-1)^2}
\sum_{\chi \in \widehat{\mathbb{F}_q^{*}}} 
\dfrac{g(R \bar{B} \chi) g(C \chi) g(\bar{R \chi}) g(\bar{A}B \bar{\chi}) g(\bar{\chi}) \chi(-1)}{g(R \bar{B}) g(C) g(\bar{R}) g(\bar{A}B)}\\
\times
\sum_{\psi \in \widehat{\mathbb{F}_q^{*}}} 
g(D \chi \psi) g(E\chi  \psi) g(\bar{A}C \bar{\chi \psi}) g( \bar{\psi}).
\end{multline*}
Now if $A \neq CD$ and $A \neq CE$ then by Theorem \ref{thm_HP}
\begin{equation*}
 \frac{1}{q-1} \sum_{\psi \in \widehat{\mathbb{F}_q^{*}}} 
g(D \chi \psi) g(E\chi  \psi) g(\bar{A}C \bar{\chi \psi}) g( \bar{\psi})
=
\frac{ g(D \chi) g(E \chi) g(\bar{A}CD) g(\bar{A}CE) g(A \bar{CDE \chi})}{q \, A \bar{CDE \chi}(-1)}.
\end{equation*}
Therefore, if $A \neq CD$ and $A \neq CE$,
\begin{equation*}
T=
 \dfrac{g(\bar{A}) g(\bar{A}CD) g(\bar{A}CE) g(A \bar{CDE})}{ g(\bar{A}C) \, q \, A \bar{CDE}(-1)}
\displaystyle\sum_{R^2=A} 
{_{4}F_3} \biggl( \begin{array}{cccc} R\bar{B}, & C, & D,  & E  \vspace{.02in}\\
 \phantom{A,} & R, & \bar{A}CDE, & A\bar{B} \end{array}
 \Big| \; 1 \biggr)_{q}^{\star}.
\end{equation*}
Overall then we have proved the following.
\begin{theorem}\label{thm_6}
For $A$, $B$, $C$, $D$, $E \in \widehat{\mathbb{F}_q^{*}}$, such that, when $A$ is a square, $A \neq \varepsilon$, $B \neq \varepsilon$, $B^2 \neq A$, $A\neq CD$, $A\neq CE$,
 \begin{multline*}
{_{5}F_4} \biggl( \begin{array}{ccccc} A, & B, & C, & D, & E \vspace{.02in}\\
\phantom{A} & A\bar{B}, & A\bar{C}, & A\bar{D}, & A\bar{E}  \end{array}
\Big| \; 1 \biggr)_{q}^{\star}\\
=
\begin{cases}
0 & \textup{if $A \neq \square$,}\\[18pt] 
 \dfrac{g(\bar{A}) g(\bar{A}DE) g(\bar{A}CD) g(\bar{A}CE) g(A \bar{CDE})}{ g(\bar{A}C) g(\bar{A}D) g(\bar{A}E) \, q \, A \bar{CDE}(-1)}\\[18pt]
\quad \times
\displaystyle\sum_{R^2=A} 
{_{4}F_3} \biggl( \begin{array}{cccc} R\bar{B}, & C, & D,  & E  \vspace{.02in}\\
 \phantom{A,} & R, & \bar{A}CDE, & A\bar{B} \end{array}
 \Big| \; 1 \biggr)_{q}^{\star}\\[24pt]
 + \displaystyle\frac{g(\bar{A}DE) g(\bar{A}CD) g(\bar{A}CE) \, q}{g(C) g(D) g(E)  g(\bar{A}C) g(\bar{A}D) g(\bar{A}E)}\,
{_{2}F_1} \biggl(  \begin{array}{ccccc} A, & B \vspace{.02in}\\
 \phantom{A,} & A\bar{B} \end{array}
 \Big| \; {-1} \biggr)_{q}^{\star}\\[24pt]
 +
\dfrac{q(q-1) \delta(\bar{A}DE)}{g(D) g(E) g(\bar{E}) g(\bar{D})}\,
{_{3}F_{2}} {\biggl( \begin{array}{ccc} A, & B, &C \\
 \phantom{A,} & A\bar{B}, & A\bar{C} \end{array}
\Big| \; 1 \biggr)}_{q}^{\star}
& \textup{otherwise.}
 \end{cases}
 \end{multline*}
\end{theorem}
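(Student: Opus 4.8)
The plan is to lower the order of the well-poised function by a single application of Theorem~\ref{thm_7}. Taking $n=4$ and $(A_0,A_1,A_2,A_3,A_4)=(A,B,C,D,E)$ rewrites the well-poised ${_5F_4}(\cdots\,|\,1)^{\star}$ as a $\psi$-sum of Gauss sums against a well-poised ${_4F_3}(\cdots\,|\,{-1})^{\star}$ carrying the extra entries $\bar\psi$ (top) and $A\psi$ (bottom), plus a $\delta(\bar A DE)$ term multiplying a well-poised ${_3F_2}(\cdots\,|\,1)^{\star}$; this is exactly equation~(\ref{for_thm6_thm7}). The argument flips from $1$ to $-1$ automatically, since Theorem~\ref{thm_7} sends the top-level argument $x$ to $-x$ inside the lower-order function, and here $x=1$.

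First I would dispose of the non-square case: the inner ${_4F_3}(\cdots\,|\,{-1})^{\star}$ vanishes whenever its leading parameter is not a square (Theorem~\ref{thm_5}), and the ${_3F_2}(\cdots\,|\,1)^{\star}$ likewise vanishes for $A\neq\square$ (Theorem~\ref{thm_4}), so both terms of~(\ref{for_thm6_thm7}) are zero, giving the first branch. For the square case, under $A\neq\varepsilon$, $B\neq\varepsilon$, $B^2\neq A$ I would substitute the explicit evaluation of the inner ${_4F_3}$ furnished by Theorem~\ref{thm_5}, which splits it into a $\sum_{R^2=A}$ of a ${_3F_2}$ (still carrying a free $\bar\psi$) plus a ${_2F_1}(\cdots\,|\,{-1})^{\star}$ piece; this is equation~(\ref{for_thm6_thm5}). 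After restoring the overall prefactor from~(\ref{for_thm6_thm7}), the ${_2F_1}$ piece assembles directly into the second summand of the stated formula.

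The heart of the argument is evaluating the remaining term $T$, the $\psi$-sum against the $R$-indexed ${_3F_2}$. Here I would expand that ${_3F_2}$ via Definition~\ref{def_HypFnFF} into its own $\chi$-sum, interchange the summations, and perform the shift $\psi\mapsto\chi\psi$ so that the entire $\psi$-dependence is concentrated in the product $g(D\chi\psi)\,g(E\chi\psi)\,g(\bar A C\,\bar{\chi\psi})\,g(\bar\psi)$. This is precisely a Helversen-Pasotto configuration, so Theorem~\ref{thm_HP} collapses the inner $\psi$-sum to a single product of Gauss sums; the hypotheses $A\neq CD$ and $A\neq CE$ are imposed exactly to annihilate the $\delta$-term of Theorem~\ref{thm_HP}. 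Reassembling the surviving $\chi$-sum and recognizing it as a ${_4F_3}(\cdots\,|\,1)^{\star}$ with top line $R\bar B,\,C,\,D,\,E$ and bottom line $R,\,\bar A CDE,\,A\bar B$ yields the closed form for $T$.

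Combining the evaluation of $T$, the ${_2F_1}$ summand, and the $\delta(\bar A DE)$ contribution (which leaves the ${_3F_2}(\cdots\,|\,1)^{\star}$ intact) gives Theorem~\ref{thm_6}. The main obstacle I anticipate is the bookkeeping in this shift-and-collapse step: keeping every Gauss-sum normalization and the $\chi(-1)$ sign factor correctly aligned through the reindexing $\psi\mapsto\chi\psi$, and confirming that the conditions $A\neq CD$, $A\neq CE$ are exactly what is needed to suppress the Helversen-Pasotto $\delta$-term so that no degenerate extra summands intrude.
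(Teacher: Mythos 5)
Your proposal matches the paper's proof step for step: Theorem \ref{thm_7} with $n=4$ gives (\ref{for_thm6_thm7}), Theorems \ref{thm_5} and \ref{thm_4} handle the non-square case and furnish (\ref{for_thm6_thm5}), and the term $T$ is evaluated exactly as you describe---expanding the ${_3F_2}$ by Definition \ref{def_HypFnFF}, shifting $\psi \mapsto \chi\psi$, collapsing the inner sum via Theorem \ref{thm_HP}, and reassembling the $\chi$-sum as the ${_4F_3}(\cdots \, | \, 1)^{\star}$. One small correction to your final remark: the Helversen-Pasotto $\delta$-term here is $\delta(\bar{A}CDE\chi)$, which depends on the outer variable $\chi$ and so cannot be annihilated for all $\chi$ by conditions on $A, C, D, E$ alone; rather, the hypotheses $A \neq CD$ and $A \neq CE$ are what guarantee (via (\ref{for_GaussConj})) that at the single exceptional character $\chi = A\bar{CDE}$, where the $\delta$-term does fire, the main term and $\delta$-term combine to equal the stated closed form, so that the collapse formula holds uniformly in $\chi$.
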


\noindent Theorem \ref{cor_6} follows when $A\neq DE$ and $A \neq CDE$ in the case $A$ is a square.
\end{proof}

\begin{proof}[Proof of Theorem \ref{cor_7}]
We start by considering the case when $n=0$. By definition
\begin{equation*}
{_{1}F_{0}} {\biggl( \begin{array}{c} A_0 \\
 \phantom{A_0}  \end{array}
\Big| \; 1 \biggr)}_{q^{\star}}
=
\frac{1}{(q-1) \, g(A_0)}  \sum_{\chi \in \widehat{\mathbb{F}_q^{*}}} g(A_0 \chi) g(\bar{\chi}) \chi(-1), 
\end{equation*}
which equals zero by Proposition \ref{thm_2}. The cases $n=1,2,3,4$ have all been dealt with in Theorems \ref{thm_3}, \ref{thm_4}, \ref{thm_5} and  \ref{thm_6} respectively. The rest follow by using induction on $n$ in Theorem \ref{thm_7}.
\end{proof}


\section{Concluding Remarks}\label{sec_cr}
The methods used in this paper for obtaining transformations broadly mirror the methods used by Whipple in proving his results and, 
just as in the classical case, these methods break down if we try to extend to results involving a well-poised ${_{6}F_{5}}(\cdots)^{\star}$ in its most general form (and when $A$ is a square). 
However, as noted in Section \ref{sec_Intro}, Whipple does obtain transformations for well-poised ${_6F_5}[\cdots | -1]$ and ${_7F_6}[\cdots | 1]$ where the `$b$' parameter is specialized to equal $1+\frac{1}{2}a$. 
Therefore such a series has $1+\frac{1}{2}a$ as one of its numerator parameters with the corresponding denominator parameter of $\frac{1}{2}a$. 
These are obviously different values but their finite field analogues would be the same character, as the analogue of the $1$ would be the trivial character. This leads to problems in trying to produce analogous results in the finite field case at these higher orders. 
It is not clear if a different interpretation of the series in this case may lead to a more appropriate finite field analogue for these values.


\vspace{12pt}

\end{document}